\documentclass[a4paper,11pt]{article}

\usepackage{authblk}
\usepackage{fullpage}
\usepackage{graphicx,url}
\usepackage[utf8]{inputenc}
\usepackage[english]{babel}
\usepackage{amsmath,amsthm}
\usepackage{algorithm}
\usepackage[noend]{algpseudocode}

\newtheorem{theorem}{Theorem}

\newtheorem{lemma}[theorem]{Lemma}

\DeclareMathOperator{\mex}{mex}
\newcommand{\xor}{\oplus}

\title{The Normal Domination Partizan Game}

\author[1]{J. M. Brito}
\author[1]{Edileudo M. Moreira Filho}
\author[1]{Jefter G. M. Paz}
\author[1]{Rudini Sampaio}

\affil[1]{Dept. Computação, Universidade Federal do Ceará, Fortaleza, Brazil}

\date{}
\begin{document}

\maketitle

\begin{abstract}
The Domination game is an impartial game on graphs, introduced in 2010, and proved PSPACE-complete in the normal variant in 2026.
In this game, Alice and Bob alternately select playable vertices, where a vertex is playable if it dominates at least one vertex not dominated by the vertices selected before in the game. The game ends when the selected vertices form a dominating set. In the normal variant, the player unable to move loses. 
In contrast to the impartial game, the partizan game has the vertices already colored with $A$, $B$, or $C$, in such a way that Alice (resp. Bob) can only select vertices colored with $A$ (resp. $B$) or $C$.
The partizan game was proved PSPACE-hard in 2026.
In this paper, we determine the winner of the Normal Domination Partizan game in graphs whose connected components are complete bipartite graphs or complete split graphs, including star forests, for any initial coloring of its vertices.
\end{abstract}

\section{Introduction}

Given a graph $G$, we say that a subset $D\subseteq V(G)$ of vertices of $G$ is a \emph{dominating set} if every vertex has a neighbor in $D$ or belongs to $D$. We say that a vertex $u$ \emph{dominates} a vertex $v$ if $u$ and $v$ are neighbors or $u=v$.

In the Domination game on a graph $G$, Alice and Bob alternately select one playable vertex, where a vertex is \emph{playable} if it dominates at least one vertex not dominated by the vertices selected before in the game. The game ends when the set of selected vertices is a dominating set of $G$.

In the normal variant, the player unable to move loses. In the misère variant, the player unable to move wins. In the optimization variant, it is also given an integer $k$ and Alice wins if and only if at most $k$ vertices are selected. The smallest $k$ such that Alice wins this variant is the \emph{game domination number} $\gamma_g(G)$. It is not difficult to check that $\gamma(G)\leq\gamma_g(G)\leq 2\gamma(G)-1$, where the \emph{domination number} $\gamma(G)$ is the size of a minimum dominating set of $G$.

The Domination game was introduced in 2010 by Bre\v{s}ar et al. \cite{bresar10} in the optimization variant, which was proved PSPACE-complete in 2016 by Bre\v{s}ar et al. \cite{bresar16}. The normal variant was introduced by Brito et al. \cite{brito25}, where it was proved PSPACE-complete. They also proved that Alice wins the normal variant in a path $P_n$ (resp. cycle $C_n$) if and only if the remainder of the division of $n$ by $4$ is different from $0$ (resp. equal to $3$).

The Domination game is classified as an \emph{impartial game}, since there are no special moves for each player, that is, in any position of the game in a graph, if Alice can make certain move, Bob could also make it if it were his turn (and vice-versa). We say that a game is \emph{partizan} if it is not impartial.

In the Normal Domination Partizan game, the vertices of the graph $G$ are already colored with colors $A$, $B$ or $C$, in such a way that Alice (resp. Bob) can only select vertices colored with $A$ (resp. $B$) or $C$. Except for this difference, the game proceeds as in the impartial game. If all vertices are colored $C$, we are in the impartial game. Brito et al. \cite{brito25} also proved that the partizan game is PSPACE-complete and obtained a polynomial time algorithm to determine the winner of the partizan game in paths and cycles, for every initial coloring of the vertices with colors $A$ and $B$.

In this paper, we use the Combinatorial Game Theory to determine in polynomial time the winner of the Normal Domination Partizan game for every disjoint union of complete bipartite graphs and complete split graphs, which includes star forests, for any initial coloring of the vertices with colors $A$, $B$ and $C$.

\section{Theory of Normal Partizan Games in a Nutshell}

Books of Combinatorial Game Theory, such as \cite{rudini26,conway82,siegel13}, present the established theory on normal games, which associates to each position of a game a value from an abelian group created by Conway \cite{conway76} in the 1970s, containing rational numbers, but also different numbers such as $*$ (which is incomparable to $0$), $\uparrow$ and $\downarrow$.
It is known that a real number represents a finite combinatorial game if and only if  it is \emph{dyadic} (rational number whose denominator is a power of 2, including all integers).
Roughly speaking, let $s$ be the value of a game. It is known that $s>0$ (resp. $s<0$) if Alice (resp. Bob) always wins, playing first or not, that $s=0$ if the first player loses and that $s||0$ (incomparable to $0$) if the first player wins. 

The notation $\{X|Y\}$ describes the values associated to games, where $X$ (resp. $Y$) is the list of options (possible moves) of Alice (resp. Bob).
For example, in the empty game $0=\{|\}$, there is no option for Alice and Bob and then the first player loses.
In the game $1=\{0|\}$ (resp. $-1=\{|0\}$), Alice (resp. Bob) wins obtaining the game $0$ (the opponent has no move).
It is known that the game $1/2^{n+1}$ is equal to $\{0|1/2^n\}$ and that $-1/2^{n+1}=\{-1/2^n|0\}$.
In the game star $*=\{0|0\}$, the first player always wins, obtaining the game $0$, and then $*||0$.

The \emph{sum} of games $J_1+J_2$ is the game in which the next player must make a move in only one of the games $J_1$ or $J_2$. It is easy to see that the sum of games is commutative and associative.
Given a game $J$, let $nJ=n\cdot J$ be the sum of $n$ disjoint copies of $J$, let $nJ*=*+nJ$ and let $-J$ be the game in which the roles of Alice and Bob are changed.
It is known that $J+(-J)=0$ (the first player always loses in the sum of $J$ and $-J$).

Let $\uparrow=\{0|*\}$ and $\downarrow=\{*|0\}$.
The game $\uparrow^{[n]}$ is recursively defined as $\uparrow^{[0]}=0$, $\uparrow^{[1]}=\uparrow$ and $\uparrow^{[n]}=\{\uparrow^{[n-1]}|*\}$ for $n\geq 2$.
Analogously, $\downarrow_{[0]}=0$, $\downarrow_{[1]}=\downarrow$ and $\downarrow_{[n]}=\{*|\downarrow_{[n-1]}\}$ for $n\geq 2$.
It is known that $\uparrow^{[n]}=-\downarrow_{[n]}$, $\uparrow^{[n+1]}>\uparrow^{[n]}$, $\downarrow_{[n+1]}<\downarrow_{[n]}$, $\uparrow^{[n]}||*$ and $\downarrow_{[n]}||*$.

When $\{X|Y\}$ is impartial, then $X=Y$. In those games, in addition to $*0=0=\{|\}$ and $*1=*=\{0|0\}$, recursively define $*(n+1)=\{*0,*1,\ldots,*n\ |\ *0,*1,\ldots,*n\}$ for every natural $n$. The values $*n$ are called \emph{nimbers} for any natural $n$.
From the Sprague-Grundy Theory, it is known that $*m+*n=*(m\xor n)$, where $\xor$ is the bitwise xor operation. Moreover, if $X$ is a set of nimbers, then $\{X|X\}$ has value $\mex(X)$, which is the minimum nimber not belonging to $X$. For example, $*1+*2+*3=*(1\xor 2\xor 3)=*0=0$ and then the first player loses. Furthermore, if $X=\{*0,*1,*2,*5\}$, then $\{X|X\}=\mex(X)=*3$.

The following result can be found in the excellent book by Siegel \cite{siegel13}.

\begin{theorem}[\cite{siegel13}]\label{thm-basic1}
Let $n\geq 1$ be an integer. Then
\[
\left\{0\ \big|\ n\uparrow\right\} = \left\{n\uparrow \big|\ n\uparrow\right\} = (n+1)\uparrow*\ \ \ \ and\ \ \ \ \left\{0\ \big|\ n\uparrow*\right\} = \left\{n\uparrow*\ \big|\ n\uparrow*\right\} = (n+1)\uparrow
\]
\[
\left\{n\downarrow \big|\ 0\right\} = \left\{n\downarrow \big|\ n\downarrow\right\} = (n+1)\downarrow*\ \ \ \ and\ \ \ \ \left\{n\downarrow*\ \big|\ 0\right\} = \left\{n\downarrow*\ \big|\ n\downarrow*\right\} = (n+1)\downarrow
\]
\end{theorem}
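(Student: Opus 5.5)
We argue by the standard criterion for equality of games: $G=H$ if and only if $G-H=0$, i.e.\ the second player wins $G+(-H)$. It suffices to treat the first line. The second line then follows by negation, because $-\{X|Y\}=\{-Y|-X\}$, $-\uparrow=\downarrow$ and $-*=*$.

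\textbf{First identity.} We prove $\{0\mid n\uparrow\}=(n+1)\uparrow*$ by showing that $\{0\mid n\uparrow\}+(n+1)\downarrow+*$ is a second-player win. Recall that $\uparrow>0$, that $\uparrow*\,\|\,0$ (so $\uparrow*$ is fuzzy), and that $m\uparrow*>0$ for $m\ge 2$.
\begin{itemize}
\item Suppose Alice starts by moving $\{0|n\uparrow\}\to 0$. This leaves $(n+1)\downarrow*$, which is $<0$ when $n+1\ge 2$, so Bob wins.
\item Suppose Bob starts by moving $\{0|n\uparrow\}\to n\uparrow$. This leaves $\downarrow*$, which is fuzzy, and Alice wins by moving first.
\item Moves in the other components, that is, in the $\downarrow$ summands or in $*$, are answered by the standard responses. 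These are verified inductively on $n$, using the canonical forms $\downarrow=\{*|0\}$ and $m\uparrow=\{(m-1)\uparrow*\mid ...\}$.
\end{itemize}

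\textbf{Cleaner route.} Alternatively, we use the known decomposition $n\uparrow = \{0 \mid (n-1)\uparrow*\}$ for $n\ge1$ and $n\uparrow* = \{0 \mid (n-1)\uparrow\}$ for $n\ge 2$ from Siegel. Then $(n+1)\uparrow*=\{0\mid n\uparrow\}$ and $(n+1)\uparrow=\{0\mid n\uparrow*\}$ become instances of these canonical forms. So it suffices to prove the canonical forms. We do this by induction on $n$:
\begin{itemize}
\item Check that the sum $\{0\mid(n-1)\uparrow*\}+n\downarrow$ has value zero via the second-player strategy. Each move is answered by the mirrored move or by a reversal through $*$.
\item Apply domination and reversal to put $\{0|n\uparrow\}$ into canonical form. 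Bob's option $n\uparrow$ reverses through Alice's option $0$ from $n\uparrow$ back to $0$, which is compatible.
\end{itemize}

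\textbf{Middle equalities.} The equalities $\{0|n\uparrow\}=\{n\uparrow|n\uparrow\}$ and $\{0|n\uparrow*\}=\{n\uparrow*|n\uparrow*\}$ come from reversibility. Alice's option $n\uparrow$ has a Bob-option $(n-1)\uparrow*$, and we need $(n-1)\uparrow*\le\{0|n\uparrow\}$. If that holds, the option reverses through to Alice's options of $(n-1)\uparrow*$, which include $0$. So the left option $n\uparrow$ may be replaced by $0$. The starred identity is handled in the same way, with $n\uparrow*$ reversing through $(n-1)\uparrow$, whose left options contain $0$ or $*$; here we rely on $0$ dominating.

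\textbf{Main obstacle.} The hard part is the case $n=1$ of the starred identity. There $0\uparrow*=*$, so the reversal lands on options of $*$ rather than $0$, and the domination comparisons must be checked explicitly. We handle this base case by direct play analysis of $\{0|\uparrow*\}-2\uparrow$ and $\{\uparrow*|\uparrow*\}-2\uparrow$.
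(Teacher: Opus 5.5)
\textbf{The statement is false in the case you single out, so your plan cannot be completed there.} The paper gives no proof of this statement; it is quoted from Siegel. Your proposal therefore has to stand on its own, and it fails exactly at the case you flag as the main obstacle. For $n=1$ the starred middle equality claims $\{\uparrow*\mid\uparrow*\}=2\uparrow$. In $\{\uparrow*\mid\uparrow*\}$ each player's only move is to $\uparrow*$, which is a first-player win (you note $\uparrow*\,\|\,0$ yourself). So whoever moves first loses, and $\{\uparrow*\mid\uparrow*\}=0$. Dually, $\{\downarrow*\mid\downarrow*\}=0\neq 2\downarrow$.

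Your planned direct play analysis of $\{\uparrow*\mid\uparrow*\}-2\uparrow$ would therefore return $2\downarrow<0$, not $0$. Your reversal argument also breaks at this point, but not for the reason you give. The Bob-option of $1\uparrow*=\{0,*\mid 0\}$ is $0\uparrow=0$, which has \emph{no} left options. Reversing through it deletes $\uparrow*$ outright and leaves $\{\,\mid\uparrow*\}$, which collapses to $0$. Your remark that $0\uparrow*=*$ concerns the unstarred identity at $n=1$, and that case is fine: $\{\uparrow\mid\uparrow\}=2\uparrow*$.

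The correct statement needs $n\ge 2$ in the starred middle equality and in its negative. This does not hurt the paper, whose applications go through $\Gamma(k)$ with $k\ge 2$; there the star appears only for even $n=k-1$.

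\textbf{The true cases are not yet proved either.} In the first identity, every move in the $\downarrow$ and $*$ components is deferred to unspecified ``standard responses''. That is where the work lies: you need an inductive hypothesis about positions such as $\{0\mid n\uparrow\}+m\downarrow$ and $\{0\mid n\uparrow\}+m\downarrow*$ for several values of $m$. In the ``cleaner route'', Bob's option $n\uparrow$ of $\{0\mid n\uparrow\}$ does not reverse. Reversal would need $0\ge\{0\mid n\uparrow\}$, which fails because Alice wins by moving to $0$; had it reversed, you would get $\{0\mid\,\}=1$. Your reversal step for the middle equalities is also conditional on an inequality you never check.

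\textbf{A short correct route.} Grant the outer equalities as Siegel's canonical forms $m\uparrow=\{0\mid(m-1)\uparrow*\}$ for $m\ge1$ and $m\uparrow*=\{0\mid(m-1)\uparrow\}$ for $m\ge 2$. Set $K=\{0\mid X\}$ with $X=n\uparrow$ or $X=n\uparrow*$. In both cases $K-X=\uparrow*$ is fuzzy, so $K\not\le X$, and the gift horse principle gives $K=\{0,X\mid X\}$. The left option $0$ can then be deleted as dominated whenever $X\ge 0$. This holds for $n\uparrow$ with $n\ge 1$ and for $n\uparrow*$ with $n\ge 2$. The only excluded case is $X=\uparrow*$, which is precisely where the equality fails.
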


The theorems below are of crucial importance to some proofs in the paper.

\begin{theorem}\label{thm-basic3}
Let $n\geq 0$. Then $\uparrow^{[n]}<2\uparrow$ and $\{\uparrow^{[n]}*,\uparrow|\uparrow\}=2\uparrow*$.
\end{theorem}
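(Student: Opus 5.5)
The plan is to prove the two claims in order, deriving the equality from the inequality through the "gift horse" principle. I use the standard facts that $G>0$ iff Alice wins $G$ both moving first and moving second, and that $-\uparrow^{[n]}=\downarrow_{[n]}$.

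\textbf{Step 1: $\uparrow^{[n]}<2\uparrow$.} This is equivalent to $G_n=2\uparrow+\downarrow_{[n]}>0$, which I prove by induction on $n$. For $n=0$ we have $G_0=2\uparrow>0$. For $n=1$ we have $G_1=\uparrow>0$. Now let $n\geq 2$, where $\downarrow_{[n]}=\{*\,|\,\downarrow_{[n-1]}\}$.
\begin{itemize}
\item \emph{Bob moves first in $\downarrow_{[n]}$.} He reaches $G_{n-1}$, which is positive by induction, so Alice wins moving next.
\item \emph{Bob moves first in one $\uparrow=\{0|*\}$.} He reaches $\uparrow*+\downarrow_{[n]}$. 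Alice replies in $\downarrow_{[n]}$ by moving to $*$, giving $\uparrow+*+*=\uparrow>0$, and she wins.
\item \emph{Alice moves first.} She moves $\downarrow_{[n]}\to *$ and obtains $2\uparrow*$. By Theorem~\ref{thm-basic1} (with $n=1$), $2\uparrow*=\{0\,|\,\uparrow\}$. There Bob's only move is to $\uparrow>0$, which Alice wins, and Bob moving first loses. So Alice wins.
\end{itemize}
Hence $G_n>0$.

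\textbf{Step 2: the equality.} By Theorem~\ref{thm-basic1} we have $\{\uparrow\,|\,\uparrow\}=2\uparrow*$. The gift horse principle states that if $X\not\geq G$, then adding $X$ as a new Left option of $G$ does not change its value. I apply it to $G=\{\uparrow\,|\,\uparrow\}$ and $X=\uparrow^{[n]}*$. It suffices to check $\uparrow^{[n]}*\not\geq 2\uparrow*$. The difference is
\[
\uparrow^{[n]}*-2\uparrow*=\uparrow^{[n]}-2\uparrow,
\]
since $*+*=0$. This is $<0$ by Step~1, so in particular it is not $\geq 0$. Therefore
\[
\{\uparrow^{[n]}*,\uparrow\,|\,\uparrow\}=\{\uparrow\,|\,\uparrow\}=2\uparrow*.
\]

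If the gift horse principle is not to be cited, I will instead check directly that $\{\uparrow^{[n]}*,\uparrow\,|\,\uparrow\}+2\downarrow*=0$. Alice's extra move leads to $\uparrow^{[n]}+2\downarrow$, which is $<0$ by Step~1, so Bob wins it. All remaining moves are answered as in $\{\uparrow|\uparrow\}+2\downarrow*=0$.

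The main obstacle is Step~1. It needs the case analysis of the Right options of $\uparrow$ inside the sum, together with the positivity of $2\uparrow*$, which is obtained from Theorem~\ref{thm-basic1}.
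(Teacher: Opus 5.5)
Your proof is correct. Step 1 is essentially the paper's argument seen from the other side. The paper shows Bob wins $\uparrow^{[n]}+2\downarrow$ whoever starts, and answers Alice's move $\uparrow^{[n]}\to\uparrow^{[n-1]}$ directly with $\uparrow^{[n-1]}\to *$. You instead handle the mirror move $\downarrow_{[n]}\to\downarrow_{[n-1]}$ by induction on $n$. Either works.

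The genuine difference is in the equality. The paper verifies $\{\uparrow^{[n]}*,\uparrow|\uparrow\}+2\downarrow*=0$ by enumerating the game tree for both starting players. You instead take $\{\uparrow|\uparrow\}=2\uparrow*$ from Theorem~\ref{thm-basic1} and use the gift horse principle: adjoining the Left option $\uparrow^{[n]}*$ is harmless because $\uparrow^{[n]}*-2\uparrow*=\uparrow^{[n]}-2\uparrow$ is not $\geq 0$, which is exactly Step 1.

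Your route is shorter and shows that the inequality is the only new ingredient. In the paper's enumeration, only the branch where Alice moves to $\uparrow^{[n]}*+2\downarrow*$ uses $\uparrow^{[n]}+2\downarrow<0$; the other branches just re-verify $\{\uparrow|\uparrow\}=2\uparrow*$. Your route also covers $n=0,1$ uniformly, whereas the paper simply asserts $\{*,\uparrow|\uparrow\}=\{\uparrow*,\uparrow|\uparrow\}=2\uparrow*$. The paper's version, in return, is self-contained and cites no outside lemma.

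Your fallback sketch is a little loose as worded. The extra Left option could in principle be used later in the play, not only as Alice's first move, and Step 1 only covers $\uparrow^{[n]}*+2\downarrow*$ itself. It becomes airtight if Bob answers every first move of Alice in the $2\downarrow*$ component with $\{\uparrow^{[n]}*,\uparrow|\uparrow\}\to\uparrow$. That reply reaches a position of value $\uparrow+\downarrow=0$ or $\downarrow<0$, and in both cases Bob wins. Alternatively, just run the mirror-strategy proof of the gift horse principle on the difference game.
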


\begin{proof}
First we prove that $\uparrow^{[n]}-2\uparrow<0$, that is, Bob always wins in $J_1(n)=\uparrow^{[n]}+2\downarrow$.
If $n=0$, then $J_1(0)=0+2\downarrow<0$.
If $n=1$, then $J_1(1)=\uparrow+2\downarrow=\downarrow<0$.
So, assume that $n\geq 2$.
Recall that $\uparrow^{[n]}=\{\uparrow^{[n-1]}\ |\ *\}$ and $\downarrow=\{*|0\}$.
If Bob plays first in $J_1(n)$, then he obtains $*+2\downarrow<0$.
If Alice plays first in $J_1(n)$ obtaining $\uparrow^{[n-1]}+2\downarrow$, then Bob obtains $*+2\downarrow<0$.
If Alice plays first in $J_1(n)$ obtaining $\uparrow^{[n]}+\downarrow+*$, then Bob obtains $*+\downarrow+*<0$.

Now we prove that $J_2(n)=\{\uparrow^{[n]}*,\uparrow|\uparrow\}+2\downarrow*=0$, that is, the first player always loses in $J_2(n)$.
If $n=0$, then $J_2(0)=\{*,\uparrow|\uparrow\}+2\downarrow*=2\uparrow*+2\downarrow*=0$.
If $n=1$, then $J_2(1)=\{\uparrow*,\uparrow|\uparrow\}+2\downarrow*=2\uparrow*+2\downarrow*=0$.
So, assume that $n\geq 2$.
Recall that $\uparrow^{[n]}=\{\uparrow^{[n-1]}\ |\ *\}$, $\downarrow=\{*|0\}$ and $*=\{0|0\}$.

First assume that Alice plays first in $J_2(n)$.
If she obtains $\{\uparrow^{[n]}*,\uparrow|\uparrow\}+\downarrow*+*$, Bob obtains $\uparrow+\downarrow*+*=0$ and he wins.
If she obtains $\{\uparrow^{[n]}*,\uparrow|\uparrow\}+2\downarrow+0$, Bob obtains $\uparrow+2\downarrow<0$ and he wins.
If she obtains $\uparrow+2\downarrow*$, Bob obtains $\uparrow+2\downarrow+0<0$ and he wins.
If she obtains $\uparrow^{[n]}*+2\downarrow*$, this game is exactly $J_1(n)<0$ and he wins.

Now assume that Bob plays first in $J_2(n)$.
If he obtains $\uparrow+2\downarrow*$, Alice obtains $\uparrow+\downarrow*+*=0$ and she wins.
If he obtains $\{\uparrow^{[n]}*,\uparrow|\uparrow\}+\downarrow*+0$, Alice obtains $\{\uparrow^{[n]}*,\uparrow|\uparrow\}+*+*>0$ and she wins.
If he obtains $\{\uparrow^{[n]}*,\uparrow|\uparrow\}+2\downarrow+0$, Alice obtains $\{\uparrow^{[n]}*,\uparrow|\uparrow\}+\downarrow*$ and Bob has three options.
If he obtains $\uparrow+\downarrow+*$, Alice obtains $\uparrow+\downarrow+0=0$ and wins.
If he obtains $\{\uparrow^{[n]}*,\uparrow|\uparrow\}+\downarrow+0$, Alice obtains $\uparrow+\downarrow=0$ and wins.
If he obtains $\{\uparrow^{[n]}*,\uparrow|\uparrow\}+0+*$, Alice obtains $\{\uparrow^{[n]}*,\uparrow|\uparrow\}+0+0>0$ and wins.
\end{proof}

\begin{theorem}\label{thm-basic2}
For every integer $n\geq 1$:
$$\uparrow^{[n]}*\ =\ \left\{0, \uparrow^{[n-1]}*\ \big|\ 0, \uparrow^{[n+1]}*\right\}\ =\ \left\{0,  \uparrow^{[n-1]}*\ \big|\ 0\right\}$$
$$\downarrow_{[n]}*\ =\ \left\{0,\downarrow_{[n+1]}*\ \big|\ 0, \downarrow_{[n-1]}*\right\}\ =\ \left\{0\ \big|\ 0, \downarrow_{[n-1]}*\right\}$$
Furthermore,
$$\uparrow^{[n]}*\ =\ \left\{0,\{0,\uparrow^{[n]}*\ |\ 0,\uparrow^{[n]}*\}\ \ \ \Big|\ \ \ 0,\{0,\uparrow^{[n]}*\ |\ 0,\uparrow^{[n]}*\}\right\}$$
$$\downarrow_{[n]}*\ =\ \left\{0,\{0,\downarrow_{[n]}*\ |\ 0,\downarrow_{[n]}*\}\ \ \ \Big|\ \ \ 0,\{0,\downarrow_{[n]}*\ |\ 0,\downarrow_{[n]}*\}\right\}$$
\end{theorem}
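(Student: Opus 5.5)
The plan is to verify every identity by computing game differences. Throughout I use three standard facts. First, $-\uparrow^{[n]}*=\downarrow_{[n]}*$, because $-*=*$ and $\uparrow^{[n]}=-\downarrow_{[n]}$. Second, $G=H$ if and only if the first player loses $G+(-H)$. Third, the gift horse principle from \cite{siegel13}: adding a Left option $X$ to $G$ does not change its value whenever $G\not\le X$, and adding a Right option $Y$ does not change it whenever $Y\not\le G$. The $\downarrow$ statements then follow from the $\uparrow$ statements by negation (swapping Alice and Bob), so I only treat the $\uparrow$ case.

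\textbf{Step 1: the shortest form.} I would first prove $\uparrow^{[n]}*=\{0,\uparrow^{[n-1]}*\mid 0\}$. Write $G=\uparrow^{[n]}+*$ with $\uparrow^{[n]}=\{\uparrow^{[n-1]}\mid *\}$. Its Left options are $\uparrow^{[n-1]}*$ and $\uparrow^{[n]}$, and its Right options are $*+*=0$ and $\uparrow^{[n]}$.
\begin{itemize}
\item Left's option $\uparrow^{[n]}$ is reversible through Bob's reply $*$: we need $*\le G$, i.e.\ $\uparrow^{[n]}\ge 0$, which holds for $n\ge 1$. So this option is replaced by the Left options of $*$, namely $0$.
\item Right's option $\uparrow^{[n]}$ is dominated by $0$, since $0\le\uparrow^{[n]}$.
\end{itemize}
This yields $\{0,\uparrow^{[n-1]}*\mid 0\}$.

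As a safety net, I would also check directly that the first player loses $\{0,\uparrow^{[n-1]}*\mid 0\}+\downarrow_{[n]}*$. This is a case analysis in the style of the proof of Theorem~\ref{thm-basic3}, by induction on $n$. It uses $\downarrow_{[n]}=\{*\mid\downarrow_{[n-1]}\}$, $\uparrow^{[n-1]}<\uparrow^{[n]}$, and the incomparabilities $\uparrow^{[k]}\,||\,*$.

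\textbf{Step 2: the middle form.} Starting from the form of Step 1, I would add options using the gift horse principle.
\begin{itemize}
\item Adding $0$ as a Right option is harmless, since it is already a Right option.
\item Adding $\uparrow^{[n+1]}*$ as a Right option is allowed provided $\uparrow^{[n+1]}*\not\le\uparrow^{[n]}*$. This holds because $\uparrow^{[n+1]}*-\uparrow^{[n]}*=\uparrow^{[n+1]}-\uparrow^{[n]}>0$ by the stated monotonicity.
\end{itemize}

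\textbf{Step 3: the self-referential form.} Let $G=\uparrow^{[n]}*$ and $K=\{0,G\mid 0,G\}$. Start from the form $\{0,\uparrow^{[n-1]}*\mid 0\}$. I expect the argument to go as follows.
\begin{itemize}
\item Show $K\,||\,G$. In $K-G$, whoever moves first moves $K$ to $G$, leaving $G-G=0$, and wins.
\item Since $K\,||\,G$, the gift horse principle lets me add $K$ as both a Left option and a Right option of $G$.
\item Next remove the Left option $\uparrow^{[n-1]}*$. It is reversible through Bob's reply $0$, because $0\le G$. Replacing it by the Left options of $0$ means deleting it.
\item Finally, the options $\{0,K\mid 0,K\}$ are exactly what the statement claims.
\end{itemize}

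\textbf{Main obstacle.} The delicate point is precisely this removal of $\uparrow^{[n-1]}*$. The reversibility condition $0\le\uparrow^{[n]}*$ must hold, and that requires checking it for $n\ge 2$ separately from $n=1$: for $n=1$ we have $\uparrow*\,||\,0$, although there the option is $\uparrow^{[0]}*=*$. If $0\le G$ fails, I would instead verify the last identity directly. I would show the first player loses $\{0,K\mid 0,K\}+\downarrow_{[n]}*$, using that $K\,||\,G$ and that every move to $K$ can be answered by moving $K$ to $G$, cancelling against $\downarrow_{[n]}*$.
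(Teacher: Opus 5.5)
Your Steps 1 and 2 are sound. In Step 1 you reverse Left's option $\uparrow^{[n]}$ through $*$ and delete Right's option $\uparrow^{[n]}$ as dominated; Step 2 is a gift-horse addition. This is slicker than the paper, which checks by exhaustive play that $\{0,\uparrow^{[n-1]}*\mid 0\}+\downarrow_{[n]}*$ and $\{0,\uparrow^{[n-1]}*\mid 0,\uparrow^{[n+1]}*\}+\downarrow_{[n]}*$ are second-player wins.

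The gap is in Step 3. To remove the Left option $\uparrow^{[n-1]}*$ by reversibility through its Right option $0$, you need $0\le\uparrow^{[n]}*$. This is false for \emph{every} $n\ge 1$, not only $n=1$. By your own Step 1, $0$ is a Right option of $\uparrow^{[n]}*$, so Bob moving first wins at once and $\uparrow^{[n]}*\,||\,0$. The reversal therefore never applies, and your ``main obstacle'' paragraph misdiagnoses this as an edge case.

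Your fallback is also insufficient as stated. ``Answer every move to $K$ by moving $K$ to $G$'' does not cover Bob's move $\downarrow_{[n]}*\to\downarrow_{[n-1]}*$ in $\{0,K\mid 0,K\}+\downarrow_{[n]}*$. There Alice must herself move to $K$ and then win $K+\downarrow_{[n-1]}*$ as second player. That line is the heart of the paper's verification that $J_3=0$.

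The missing idea is the inequality $\uparrow^{[n-1]}*\le K$. It lets you delete $\uparrow^{[n-1]}*$ from $\{0,\uparrow^{[n-1]}*,K\mid 0,K\}$ by \emph{domination} rather than reversal, and it is exactly what the direct check needs as well.

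To prove it, show that Bob moving first loses $K+\downarrow_{[n-1]}*$. Use $\downarrow_{[m]}*=\{0\mid 0,\downarrow_{[m-1]}*\}$ (the negation of Step 1), with $\downarrow_{[0]}*=*$.
\begin{itemize}
\item If Bob moves $K\to 0$ or $\downarrow_{[n-1]}*\to 0$, Alice moves the other component to $0$.
\item If he moves $K\to G$, what remains is $\uparrow^{[n]}-\uparrow^{[n-1]}>0$.
\item If he moves $\downarrow_{[n-1]}*\to\downarrow_{[n-2]}*$ (when $n\ge 2$), Alice moves $K\to G$, leaving $\uparrow^{[n]}-\uparrow^{[n-2]}>0$.
\end{itemize}
With this inequality in place, your Step 3 works uniformly for all $n\ge 1$.
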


\begin{proof}
We first prove that $J_1=\{0,\uparrow^{[n-1]}*|0\}+\downarrow_{[n]}*=0$, that is, the first to play in $J_1$ loses.
Suppose that Alice plays first in $J_1$.
If she obtains $0+\downarrow_{[n]}*$, Bob obtains $\downarrow_{[n]}<0$ and wins.
If she obtains $\uparrow^{[n-1]}*+\downarrow_{[n]}*<0$, Bob wins.
If she obtains $\{0,\uparrow^{[n-1]}*|0\}+\downarrow_{[n]}+0$ (since $*=\{0|0\}$), Bob obtains $0+\downarrow_{[n]}<0$ and wins.
If she obtains $\{0,\uparrow^{[n-1]}*|0\}+*+*$ (since $\downarrow_{[n]}=\{*|\downarrow_{[n-1]}\}$), Bob obtains $0+*+*=0$ and wins.
Now suppose that Bob plays first in $J_1$.
If he obtains $0+\downarrow_{[n]}*$, Alice obtains $0+*+*=0$ and wins.
If he obtains $\{0,\uparrow^{[n-1]}*|0\}+\downarrow_{[n-1]}+*$ (since $\downarrow_{[n]}=\{*|\downarrow_{[n-1]}\}$), Alice obtains $\uparrow^{[n-1]}*+\downarrow_{[n-1]}*=0$ and wins.
If he obtains $\{0,\uparrow^{[n-1]}*|0\}+\downarrow_{[n]}+0$, Alice obtains $\{0,\uparrow^{[n-1]}*|0\}+*$ and there are 2 possibilities for the second move of Bob: (i) if Bob obtains $0+*$, Alice wins; and (ii) if Bob obtains $\{0,\uparrow^{[n-1]}*|0\}+0$, Alice obtains $0$ and wins. Since the second player always wins, $J_1=0$.

Now let us prove that $J_2=\{0,\uparrow^{[n-1]}*|0,\uparrow^{[n+1]}*\}+\downarrow_{[n]}*=0$, that is, the first to play in $J_2$ loses.
The only difference to the last case is when Bob plays first, obtaining $\uparrow^{[n+1]}*+\downarrow_{[n]}*>0$, which is winner for Alice. Since the second player always wins, $J_2=0$.

Finally, let us prove that $J_3 = \Delta+\downarrow_{[n]}*=0$, that is, the first to play in $J_3$ loses, where
\[
\Delta\ =\ \left\{0,\{0,\uparrow^{[n]}*\ |\ 0,\uparrow^{[n]}*\}\ \ \ \Big|\ \ \ 0,\{0,\uparrow^{[n]}*\ |\ 0,\uparrow^{[n]}*\}\right\}.
\]
Suppose that Alice plays first. If she obtains $0+\downarrow_{[n]}*$, Bob obtains $\downarrow_{[n]}<0$ and wins.
If she obtains $\{0,\uparrow^{[n]}*\ |\ 0,\uparrow^{[n]}*\}+\downarrow_{[n]}*$, Bob obtains $\uparrow^{[n]}*+\downarrow_{[n]}*=0$ and wins.
If she obtains $\Delta+\downarrow_{[n]}$, Bob obtains $0+\downarrow_{[n]}<0$ and wins.
If she obtains $\Delta+*+*$, Bob obtains $0+*+*=0$ and wins.
Now suppose that Bob is the first to play in $J_3$.
If he obtains $0+\downarrow_{[n]}+*$, Alice obtains $0+*+*=0$ and wins.
If he obtains $\{0,\uparrow^{[n]}*\ |\ 0,\uparrow^{[n]}*\}+\downarrow_{[n]}*$, Alice obtains $\uparrow^{[n]}*+\downarrow_{[n]}*=0$ and wins.
If he obtains $\Delta+\downarrow_{[n]}$, Alice obtains $\Delta+*$ and  there are 3 possibilities for the second move of Bob: (i) if Bob obtains $\Delta+0$, Alice obtains $0$ and wins; (ii) if Bob obtains $0+*$, Alice wins; and (iii) if Bob obtains $\{0,\uparrow^{[n]}*\ |\ 0,\uparrow^{[n]}*\}+*$, Alice obtains $\uparrow^{[n]}*+*>0$ and wins.
Finally, if in the first move he obtains $\Delta+\downarrow_{[n-1]}*$, Alice obtains $\{0,\uparrow^{[n]}*\ |\ 0,\uparrow^{[n]}*\}+\downarrow_{[n-1]}*$ and there are 4 possibilities for the second move of Bob: (i) if Bob obtains $0+\downarrow_{[n-1]}*$, Alice obtains $0+*+*=0$ and wins; (ii) if Bob obtains $\uparrow^{[n]}*+\downarrow_{[n-1]}*>0$, Alice wins; (iii) if Bob obtains $\{0,\uparrow^{[n]}*|0,\uparrow^{[n]}*\}+\downarrow_{[n-1]}+0$, Alice obtains $\{0,\uparrow^{[n]}*|0,\uparrow^{[n]}*\}+*>0$ and wins; and (iv) if Bob obtains $\{0,\uparrow^{[n]}*|0,\uparrow^{[n]}*\}+\downarrow_{[n-2]}*$, Alice obtains $\uparrow^{[n]}*+\downarrow_{[n-2]}*>0$ and wins.
Since the second player always wins, $J_3=0$.
\end{proof}

In the rest of the paper, we use the following notation, where $k$ is a natural number. Let
\[
\Delta(k)=
\begin{cases}
    *,&\mbox{if $k$ is odd},\\
    0,&\mbox{if $k$ is even}
\end{cases}
\]
Also let $\Gamma(k)=(k-1)\uparrow+\Delta(k)$.
Notice that Theorem \ref{thm-basic1} implies that
\[
\{0\ |\ \Gamma(k)\}\ =\ \{\Gamma(k)\ |\ \Gamma(k)\}\ =\ \Gamma(k+1).
\]

\section{Complete Split Graphs}\label{sec-split}

In this section, we show that the game in a complete split graph can be reduced to a game in a star, which is solved in Section \ref{sec-star}.
A \emph{complete split graph} is any graph $G$ such that $V(G)$ has a partition $(K,S)$ such that $K$ induces a clique, $S$ induces an independent set and there are all possible edges between $K$ and $S$.
Notice that complete graphs are complete split graphs with $S=\emptyset$.

We first prove a theorem on general graphs with true twins, where we say that two vertices are \emph{true twins} if they are adjacent and have the same neighbors

\begin{theorem}\label{thm-twins}
Let $G$ be a graph in which every vertex is colored $A$, $B$ or $C$. Let $u$ and $v$ be true twins of $G$.
Let $G'$ be the graph obtained from $G$ by removing $u$ and recoloring $v$ with color $C$ if $u$ and $v$ have different colors in $G$. Then the Normal Domination Partizan game in $G$ is equivalent to the game in $G'$.
\end{theorem}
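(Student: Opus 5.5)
The plan is to build an explicit correspondence between positions of the game in $G$ and positions of the game in $G'$, and then show by induction that corresponding positions have the same game value. The key observation is that true twins $u$ and $v$ have the same closed neighborhood $N[u]=N[v]$. So selecting either of them dominates exactly the same set of vertices. Moreover, after one of them has been selected, the other is never playable again, since everything it could dominate is already dominated.

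A position of the game in $G$ is determined by the set $D\subseteq V(G)$ of selected vertices. Define $\varphi(D)=D$ if $u\notin D$, and $\varphi(D)=(D\setminus\{u\})\cup\{v\}$ if $u\in D$. Since $u,v$ are twins, at most one of them is ever in $D$, and the set of vertices dominated by $D$ in $G$, restricted to $V(G')$, equals the set dominated by $\varphi(D)$ in $G'$. Vertex $u$ itself is dominated by $D$ if and only if $v$ is, because $N[u]=N[v]$. Consequently, for any $w\neq u$, $w$ is playable after $D$ in $G$ if and only if $w$ is playable after $\varphi(D)$ in $G'$. In addition, $u$ is playable in $G$ exactly when $v$ is.

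The main step is comparing the move sets. For Alice, the moves available in $G$ are the playable vertices colored $A$ or $C$. If neither $u$ nor $v$ is playable, the move sets coincide under $\varphi$. If both are playable, Alice can select $u$ or $v$ in $G$ exactly when at least one of them has color $A$ or $C$. In $G'$, $v$ has color $C$ if the colors of $u$ and $v$ differ, and otherwise keeps their common color. In either case, Alice can select $v$ in $G'$ if and only if she can select at least one of $u,v$ in $G$, and the symmetric statement holds for Bob.

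Moreover, selecting $u$ or selecting $v$ in $G$ leads to positions $D\cup\{u\}$ and $D\cup\{v\}$ with the same image $\varphi(D)\cup\{v\}$. So the options of each player in position $D$ of $G$ map onto the options in position $\varphi(D)$ of $G'$, possibly with duplicates. Duplicated options do not change the value $\{X|Y\}$.

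The proof then concludes by induction on the number of undominated vertices, or equivalently on the remaining game length. Terminal positions correspond, since $D$ dominates $G$ if and only if $\varphi(D)$ dominates $G'$. By the correspondence of option sets, the game from $D$ in $G$ and the game from $\varphi(D)$ in $G'$ have identical option lists up to repetition, hence equal values. Applying this to $D=\emptyset$ gives the equivalence of the two games.

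The main obstacle is purely bookkeeping. One must check carefully that the recoloring rule covers all color pairs: $(A,B)$ and $(A,C)$ give $C$, while equal colors are kept. It must also be checked that the never-playable-again property prevents a second twin from creating an extra move in $G$.
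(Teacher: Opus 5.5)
Your proof is correct and takes the same approach as the paper. Both rest on two facts: the twins are mutually exclusive moves with identical effect, and the merged color of $v$ records exactly which players can make that move. The paper states this in a few lines; you make it rigorous with the position map $\varphi$ and induction on the game tree. You also check a point the paper leaves implicit: $u$ is dominated exactly when $v$ is, so removing $u$ changes neither playability nor when the game ends.
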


\begin{proof}
Notice that, in the game in $G$, if either $u$ or $v$ is played, then the other becomes unplayable. If both have color $A$, only Alice can play them. If both have color $B$, only Bob can play them. Otherwise, both players can make one move in either $u$ or $v$. This is equivalent to the game in $G'$.
\end{proof}

With this, the result on complete graphs and complete split graphs are direct consequences.

\begin{theorem}\label{thm-clique}
The value of a complete graph in the Domination game is $1$ if every vertex has color $A$, or it is $-1$ if every vertex has color $B$, or it is $*$, otherwise.
\end{theorem}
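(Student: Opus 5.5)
The plan is to apply Theorem~\ref{thm-twins} repeatedly until the clique becomes a single vertex, and then evaluate that one-vertex game directly. In a complete graph $K_n$, any two distinct vertices $u$ and $v$ are true twins: they are adjacent, and their closed neighborhoods are both $V(K_n)$. Removing $u$ therefore leaves a complete graph $K_{n-1}$ in which $v$ has been recoloured $C$ if $u$ and $v$ had different colours. By Theorem~\ref{thm-twins}, the game on $K_n$ is equivalent to the game on this $K_{n-1}$. Inducting on $n$, the game on $K_n$ is equivalent to the game on a single vertex $K_1$ with some colour $c\in\{A,B,C\}$.

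Next I determine $c$. If every vertex has colour $A$, each reduction merges two $A$-vertices and so keeps the colour $A$; hence $c=A$, and symmetrically $c=B$ when every vertex is $B$. Otherwise, some vertex has colour $C$, or there are vertices of both colours $A$ and $B$. I claim $c=C$ in this case. To see it, I choose the order of reductions. While at least two vertices remain, I pick a pair of twins, removing one and keeping the other. First, if a $C$-vertex exists, I always keep it and remove the other vertex of the pair; the surviving colour is then $C$, whatever the partner's colour. Second, if no $C$-vertex exists but both $A$ and $B$ occur, I first merge an $A$-vertex with a $B$-vertex, which produces a $C$-vertex, and then continue as in the first case. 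Since equivalence holds at every step, any order of reductions is valid, so $c=C$.

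Finally, I evaluate the one-vertex game. Playing the single vertex dominates it, after which no vertex is playable, so the resulting position is $0$. If $c=A$, only Alice can move, and the value is $\{0\,|\,\}=1$. If $c=B$, only Bob can move, and the value is $\{\,|\,0\}=-1$. If $c=C$, both players can move, and the value is $\{0\,|\,0\}=*$.

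The only mildly delicate point is the second step: the final colour must not depend on the order of reductions in an essential way. This is harmless, because I may choose the order freely, and equivalent games have equal values.
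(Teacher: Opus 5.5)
Your proof is correct and follows the paper's argument. All vertices of a clique are true twins, so repeated application of Theorem~\ref{thm-twins} reduces the game to a single vertex, whose value is $1$, $-1$ or $*$ according to its colour; your explicit choice of reduction order, which shows the final colour is $C$ in the mixed case, is a detail the paper leaves implicit.
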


\begin{proof}
Notice that all vertices of a clique are true twins. Then applying Theorem \ref{thm-twins} successively, we obtain a graph $G'$ with only one vertex $v$, whose color is $A$ (resp. $B$) if every vertex of the clique has color $A$ (resp. $B$), or is $C$, otherwise.
The game in $G'$ is $\{0|\}=1$ if $v$ has color $A$ in $G'$, since Alice always wins in the first move (leading to a game $0$) and Bob has no move.
The game in $G'$ is $\{|0\}=-1$ if $v$ has color $B$ in $G'$, since Bob always wins in the first move (leading to a game $0$) and Alice has no move.
Otherwise, the first player (Alice or Bob) always wins in the first move (leading to a game $0$) and then the game in $G'$ is $\{0|0\}=*$.
\end{proof}

\begin{theorem}\label{thm-split}
Consider the game in a complete split graph $G$ with partition $(K,S)$. If every vertex of the clique $K$ has color $A$ (resp. $B$), the game is equivalent to the game in the star with center colored $A$ (resp. $B$) and leaves in $S$. Otherwise, the game is equivalent to the game in the star  with center colored $C$ and leaves in $S$.
\end{theorem}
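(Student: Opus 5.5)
The plan is to apply Theorem \ref{thm-twins} repeatedly to the clique $K$, exactly as in the proof of Theorem \ref{thm-clique}.

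First we check that any two vertices $u,v\in K$ are true twins in $G$. They are adjacent because $K$ is a clique. Every other vertex of $K$ is adjacent to both of them, and by the definition of a complete split graph every vertex of $S$ is adjacent to both of them. Hence $N[u]=N[v]=V(G)$, so $u$ and $v$ have the same closed neighborhood.

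Next we apply Theorem \ref{thm-twins} to such a pair. This deletes $u$ and recolors $v$ with $C$ if the two colors differ. The resulting graph is again a complete split graph, with partition $(K\setminus\{u\},S)$. The remaining vertices of $K$ are still pairwise adjacent and still dominate the whole graph, so they are still true twins. We therefore iterate, by induction on $|K|$, until a single clique vertex $w$ remains. The graph is then the star with center $w$ and leaf set $S$.

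It remains to track the color of $w$, by induction over the successive reductions. As long as all clique vertices are colored $A$, every merge is between two $A$-vertices, so the surviving vertex keeps color $A$. The case of color $B$ is symmetric. Otherwise $K$ contains two vertices of different colors, or a vertex colored $C$. If two colors differ, then at the moment the two classes first meet in a merge, the surviving vertex becomes $C$. A $C$-vertex merged with anything either has a different color, and so becomes $C$, or merges with another $C$-vertex and stays $C$. Thus $C$ persists, and the final center has color $C$.

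Finally, the degenerate case $S=\emptyset$ is the complete graph, already handled by Theorem \ref{thm-clique}, and the case $|K|=1$ needs no reduction at all.

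The only point needing care is the invariant that "not all $A$ and not all $B$" leads to a final color $C$. The cleanest way to see it is to merge vertices in any order and note two facts: color $C$ is absorbing under Theorem \ref{thm-twins}, and a monochromatic $A$ or $B$ class only stays monochromatic if no other color is ever merged into it.
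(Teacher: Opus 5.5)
Your proposal is correct and follows the same route as the paper, which simply notes that all vertices of $K$ are true twins and applies Theorem~\ref{thm-twins} successively. You also make explicit two points the paper leaves implicit: the remaining clique vertices stay true twins after each deletion, and color $C$ is absorbing, so the final center is colored $C$ unless $K$ is entirely $A$ or entirely $B$.
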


\begin{proof}
Notice that all vertices of $K$ are true twins. Then, we have the result applying Theorem \ref{thm-twins} successively on the vertices of $K$.
\end{proof}

\section{Stars}\label{sec-star}

In this section, we solve the game for any initial coloring of a star $K_{1,n}$ in Theorems \ref{thm-star1}, \ref{thm-star2}, \ref{thm-star3}, \ref{thm-star4} and \ref{thm-star5}. This implies the solution of the game in complete split graphs from Theorem \ref{thm-split}.

\begin{theorem}\label{thm-star1}
Consider the game in a star $K_{1,n}$.
If every vertex has color $C$, the value is $*$ if $n$ is odd, or it is $*2$ if $n$ is even.
\end{theorem}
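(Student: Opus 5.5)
Since every vertex has color $C$, the game is impartial, so its value is a nimber and I would compute it with the Sprague--Grundy theory, i.e.\ by taking $\mex$ over the option values. Let $S_n$ be the game on $K_{1,n}$ with all vertices uncolored (color $C$), with center $c$ and leaves $\ell_1,\dots,\ell_n$. The plan is to describe every position that can arise, since the structure of a star keeps them very simple.

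First, the positions. If the center is played at any moment, every vertex becomes dominated and the game ends, leaving value $0$. If instead $j\geq 1$ leaves have been played and the center has not, then the center is already dominated. The still undominated vertices are exactly the $n-j$ unplayed leaves. Each unplayed leaf is playable, since it dominates itself. The center is playable if and only if $n-j\geq 1$, since it dominates those leaves. Thus such a position depends only on $m=n-j$; call it $P_m$. The position $P_0$ is terminal, so $P_0=0$. For $m\geq 1$, the options of $P_m$ are the center, giving value $0$, and a leaf, giving $P_{m-1}$. By induction, $P_m=\mex\{0,P_{m-1}\}$, so $P_1=\mex\{0,0\}=*$, $P_2=\mex\{0,*\}=*2$, $P_3=\mex\{0,*2\}=*$, and so on. Hence $P_m=*$ for odd $m$ and $P_m=*2$ for even $m\geq 2$.

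Second, the initial position $S_n$. Its options are the center, giving value $0$, and any leaf, giving $P_{n-1}$. The reason the leaf option gives $P_{n-1}$ is that after one leaf is played the center is dominated, so we are exactly in the situation analysed above with $m=n-1$. Therefore $S_n=\mex\{0,P_{n-1}\}$, which means $S_n$ equals $P_n$ as computed by the same recursion. So $S_n=*$ for $n$ odd and $S_n=*2$ for $n$ even.

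Two points need care. For $n=1$ we get $P_0=0$ and $S_1=\mex\{0\}=*$, consistent with Theorem~\ref{thm-clique} since $K_{1,1}=K_2$. The main obstacle, though minor, is checking carefully that the center stays playable in $P_m$ for $m\geq1$ and that no other vertices are playable. This holds because a played leaf only dominates itself and the already dominated center, so the positions really are as described.
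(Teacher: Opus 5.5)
Your proof is correct and is essentially the paper's argument: an induction on the recursion $\mex\{0,\cdot\}$, which makes the value alternate between $*$ and $*2$. What you add is the explicit family $P_m$ (positions with the center already dominated), which justifies a step the paper takes silently, namely that after a leaf move in $K_{1,n}$ the remaining position has the same value as $K_{1,n-1}$; this holds because $S_m$ and $P_m$ have identical options for $m\geq 1$.
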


\begin{proof}
By induction on $n$.
Note that $K_{1,1}$ has value $\mex\{0\}=*$.
Let $n>1$ and suppose it holds true for values less than $n$.
Thus $K_{1,n}$ has value $\mex\{0,K_{1,n-1}\}$. If $n$ is odd, $K_{1,n-1}$ has value $*2$ by induction and then $K_{1,n}$ has value $\mex\{0,*2\}=*$. If $n$ is even, $K_{1,n-1}$ has value $*$ by induction and then $K_{1,n}$ has value $\mex\{0,*\}=*2$. 
\end{proof}

\begin{theorem}\label{thm-star2}
Consider the game in a star $K_{1,n}$ with $a$ leaves of color $A$, $b=n-a$ leaves of color $B$ and $n\geq 2$ such that the universal vertex $u$ has color $A$.
Then the value is $a-b$ if $b<a$, or it is $1/2^{b-a+1}$ if $b\geq a\geq 1$, or it is $\Gamma(b)$ if $a=0$.
In the case of universal vertex with color $B$, the value is $-J$, where $J$ is the value changing the colors $A$ and $B$.
\end{theorem}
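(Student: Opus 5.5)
The approach is to compute the value recursively: first reduce every position of the game to a two-parameter family, then verify the claimed values by induction, using the simplicity theorem for numbers and the identity $\{0\,|\,\Gamma(k)\}=\Gamma(k+1)$.

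\textbf{Step 1: the position recursion.} Let $u$ be the center, colored $A$. Once any vertex has been played, $u$ is dominated. The remaining game is determined by the numbers $a'$ and $b'$ of undominated leaves of colors $A$ and $B$. A leaf $w$ is dominated only by $w$ and $u$. Hence Alice's moves are: play $u$, which ends the game and gives value $0$ (legal while some leaf is undominated), or play an undominated $A$-leaf. Bob's only moves are to play an undominated $B$-leaf. Define $P(0,0)=0$ and
\[
P(a,b)=\{0,\ P(a-1,b)\ |\ P(a,b-1)\},
\]
where an option is omitted if the corresponding parameter is $0$.

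At the start, $u$ is undominated, but playing any leaf dominates $u$. So the initial position has exactly these options, and its value is $P(a,b)$ with $a+b=n\geq 2$.

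\textbf{Step 2: the easy boundary cases.} If $b=0$, then $P(a,0)=\{0,a-1\,|\,\}=a$ by induction. If $a=0$, then $P(0,1)=\{0|0\}=*=\Gamma(1)$. For $b\geq 2$, $P(0,b)=\{0\,|\,\Gamma(b-1)\}=\Gamma(b)$ by the consequence of Theorem~\ref{thm-basic1} noted after the definition of $\Gamma$.

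\textbf{Step 3: $a\geq 1$, by induction on $a+b$.} There are three subcases.
\begin{itemize}
\item If $b<a$, the left options are $0$ and $a-1-b\geq 0$, and the right option is $a-b+1$. Then $0$ is dominated, and the simplest number strictly between $a-1-b$ and $a-b+1$ is $a-b$.
\item If $b\geq a\geq 2$, the left option $P(a-1,b)=1/2^{b-a+2}>0$ dominates $0$. The right option is $P(a,b-1)$, which equals $1/2^{b-a}$, or $1$ when $b=a$. So the value is $1/2^{b-a+1}$.
\item If $a=1$ and $b=1$, the position is $\{0,*\,|\,1\}$. The option $*$ is reversible: Bob's reply from $*$ is $0$, and $0\leq\{0|1\}$. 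Bypassing it (through $0$, which has no left options) leaves $\{0|1\}=1/2$.
\end{itemize}

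\textbf{Step 4, the main obstacle: $a=1$, $b\geq 2$.} Here the left option is $\Gamma(b)$, which is infinitesimal rather than a number. I will argue as follows.
\begin{itemize}
\item $\Gamma(b)$ is either $(b-1)\uparrow$ or $(b-1)\uparrow*$ with $b-1\geq 1$. It is positive except in the case $\uparrow*$ (that is, $b=2$), where it is incomparable with $0$. In either case $0$ is not dominated away automatically, so it is kept as a left option.
\item The game is $\{0,\Gamma(b)\,|\,1/2^{b-1}\}$. Since $\Gamma(b)$ is smaller than every positive number, I check directly that $\{0,\Gamma(b)\,|\,1/2^{b-1}\}-1/2^b=0$. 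Equivalently, I apply the simplicity theorem: $x=1/2^b$ satisfies $0<x$, $\Gamma(b)\not\geq x$ and $x<1/2^{b-1}$, and no simpler number, such as $0$ or any $1/2^j$ with $j<b$, fits.
\item For the $b=2$ case with $\uparrow*$, I will verify by a short explicit second-player-win argument that $\{0,\uparrow*\,|\,1/2\}+(-1/4)=0$, in the style of the proofs of Theorems~\ref{thm-basic3} and~\ref{thm-basic2}.
\end{itemize}

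\textbf{Step 5: center colored $B$.} This case follows by negation, since swapping colors $A$ and $B$ swaps the roles of the two players.
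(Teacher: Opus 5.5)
Your proposal is correct and follows essentially the paper's route: induction on $a+b$ via the recursion $(a,b)=\{0,(a-1,b)\,|\,(a,b-1)\}$, with a uniform simplicity-theorem treatment of $a=1$, $b\geq 2$ in place of the paper's parity split (it also covers $(1,2)$, which that split, stated for $k\geq 2$, does not list). There are two harmless slips: $\Gamma(2)=\uparrow$, not $\uparrow*$, so $\Gamma(b)>0$ for every $b\geq 2$ (the left option $0$ is then dominated and your separate $b=2$ check is unnecessary); and for $b=a-1\geq 1$ the left option is $P(b,b)=1/2$ rather than $0$, although $\{0,1/2\,|\,2\}=1=a-b$ still holds.
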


\begin{proof}
By induction on $n=a+b$. Suppose that the color of the universal vertex $u$ is $A$ (the case of color $B$ is analogous). Let us represent the game as $(a,b)$. If $(a,b)=(2,0)$, then $(a,b)=\{0,1|\}=2$. If $(a,b)=(1,1)$, then $(a,b)=\{0,*|1\}=1/2$. If $(a,b)=(0,2)$, then $(a,b)=\{0|*\}=\uparrow$.
Let $n\geq 3$ and suppose it holds true for values less than $n$. Let us prove for $n=a+b$. Note that, by induction and  by Theorem \ref{thm-basic1}, where $k\geq 2$ is an integer:
\begin{itemize}
\item $(0,2k-1)\ =\ \{0\ \big|\ (0,2k-2)\}\ =\ \{0\ \big|\ (2k-3)\uparrow\}\ =\ (2k-2)\uparrow*$
\item $(0,2k)\ =\ \{0\ \big|\ (0,2k-1)\}\ =\ \{0\ \big|\ (2k-2)\uparrow*\}\ =\ (2k-1)\uparrow$
\item $(1,2k-1)\ =\ \{0, (0,2k-1)\ \big|\ (1,2k-2)\}\ =\ \{(2k-1)\uparrow*\ \big|\ 1/2^{2k-2}\}\ =\ 1/2^{2k-1}$
\item $(1,2k)\ =\ \{0, (0,2k)\ \big|\ (1,2k-1)\}\ =\ \{(2k-1)\uparrow\ \big|\ 1/2^{2k-1}\}\ =\ 1/2^{2k}$
\end{itemize}
Thus suppose that $a\geq 2$. Note that
\begin{itemize}
\item $(a,0)\ =\ \{0,(a-1,0)|\}\ =\ \{a-1|\}\ =\ a$
\item $(a,a-1)\ =\ \{0,(a-1,a-1)\ \big|\ (a,a-2)\}\ =\ \{1/2\ \big|\ 2\}\ =\ 1$
\item $(a,a)\ \ \ \ \ \ \ \ =\ \{0,(a-1,a)\ \big|\ (a,a-1)\}\ =\ \{1/4\ \big|\ 1\}\ =\ 1/2$
\item $(a,a+1)\ =\ \{0,(a-1,a+1)\ \big|\ (a,a)\}\ =\ \{1/8\ \big|\ 1/2\}\ =\ 1/4$
\end{itemize}
If $b<a-1$ and $b>0$, then
\begin{itemize}
\item $(a,b)\ =\ \{0,(a-1,b)\ \big|\ (a,b-1)\}\ =\ \{a-b-1\ \big|\ a-b+1\}\ =\ a-b$
\end{itemize}
If $b>a+1$, then
\begin{itemize}
\item $(a,b)\ =\ \{0,(a-1,b)\ \big|\ (a,b-1)\}\ =\ \{1/2^{b-a+2}\ \big|\ 1/2^{b-a}\}\ =\ 1/2^{b-a+1}$
\end{itemize}
\end{proof}

\begin{theorem}\label{thm-star3}
Consider the game in a star $K_{1,n}$ with $a$ leaves with color $A$, $b=n-a$ leaves of color $B$ and $n\geq 1$ such that the universal vertex $u$ has color $C$.
If $a=b$, the value is $*2$. If $|a-b|=1$, the value is $*$.
If $a\geq b+2$, the value is $\uparrow^{[a-b-1]}*$.
If $b\geq a+2$, the value is $\downarrow_{[b-a-1]}*$.
\end{theorem}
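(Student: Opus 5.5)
The approach is induction on $n=a+b$, after first pinning down the recursive structure of the game. Playing the universal vertex $u$ dominates everything, so it ends the game and leads to $0$. Playing a leaf $\ell$ dominates $\ell$ and $u$. The remaining leaves then stay undominated and playable. The vertex $u$ is also still playable, because it dominates the remaining leaves, as long as at least one leaf is undominated.

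Let $H(a,b)$ denote the position in which $u$ is already dominated and $a$ $A$-leaves and $b$ $B$-leaves remain undominated. Then
\[
H(a,b)=\{0,H(a-1,b)\ |\ 0,H(a,b-1)\}\ \ \mbox{for }a+b\geq 1,\qquad H(0,0)=0,
\]
where an option is present only when the corresponding leaf exists. The original star satisfies exactly the same recursion when $n\geq 1$. The only difference between the two positions is the empty star: a single $C$-vertex has value $*$, whereas $H(0,0)=0$. Hence the value of the star equals $H(a,b)$ for $n\geq 1$, and it suffices to compute $H(a,b)$. Swapping the roles of $A$ and $B$ negates the value. Since $-\uparrow^{[k]}*=\downarrow_{[k]}*$, it is enough to treat $a\geq b$.

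The base cases are immediate. We get $H(1,0)=\{0,0\,|\,0\}=*$ and $H(0,1)=*$. Next, $H(1,1)=\{0,*\,|\,0,*\}=*2$. Finally, $H(2,0)=\{0,*\,|\,0\}$, which is $\uparrow*$ by Theorem~\ref{thm-basic2} with $n=1$.

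For the inductive step, let $d=a-b\geq 0$ and consider cases.
\begin{itemize}
\item If $d=0$, both non-zero options are $*$ by induction, so the value is $\{0,*\,|\,0,*\}=*2$.
\item If $d=1$ with $b\geq 1$, the position is $\{0,*2\,|\,0,\uparrow*\}$, and we show this equals $*$ by playing the difference game $\{0,*2\,|\,0,\uparrow*\}+*$.
  \begin{itemize}
  \item Against Alice moving to $0+*$ or to $G+0$, Bob replies to $0$.
  \item Against Alice moving to $*2+*$, Bob moves $*2\to *$, leaving $*+*=0$.
  \item Against Bob moving to $\uparrow*+*=\uparrow>0$, Alice wins.
  \item Bob's remaining moves are symmetric to Alice's.
  \end{itemize}
  So the second player wins, and the difference is $0$.
\item If $d=2$, the position is $\{0,*\,|\,0,\uparrow^{[2]}*\}$, or $\{0,*\,|\,0\}$ when $b=0$. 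Both equal $\uparrow*$ by Theorem~\ref{thm-basic2} with $n=1$.
\item If $d=k\geq 3$, the induction hypothesis gives options $\{0,\uparrow^{[k-2]}*\,|\,0,\uparrow^{[k]}*\}$, or $\{0,\uparrow^{[k-2]}*\,|\,0\}$ when $b=0$. Both equal $\uparrow^{[k-1]}*$ by Theorem~\ref{thm-basic2} with $n=k-1$.
\end{itemize}

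The main obstacle is the first step: correctly identifying that after the first leaf move the center remains playable but loses its standalone value $*$ once no leaves remain. This is what makes the recursion uniform. The rest consists of direct applications of Theorem~\ref{thm-basic2}, plus the one small difference-game check in the case $|a-b|=1$.
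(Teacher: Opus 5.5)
Your proof is correct and follows essentially the same route as the paper: induction on $n=a+b$ with the recursion $\{0,(a-1,b)\,|\,0,(a,b-1)\}$, a case analysis on $a-b$ using Theorem~\ref{thm-basic2} for $|a-b|\geq 2$, and the $A/B$ symmetry for the $\downarrow_{[k]}*$ cases. You are more careful than the paper in three places: you justify why residual positions coincide with smaller stars, you verify $\{0,*2\,|\,0,\uparrow*\}=*$ via the difference game where the paper only asserts it, and you write the $(2,0)$ base case correctly as $\{0,*\,|\,0\}$, whereas the paper's $\{0,*\,|\,*\}$ is a slip.
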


\begin{proof}
By induction on $n=a+b$. Let us represent the game as $(a,b)$.
If $(a,b)$ is $(1,0)$ or $(0,1)$, the value is $\{0|0\}=*$.
If $(a,b)$ is $(1,1)$, the value is $\{0,*|0,*\}=*2$.
If $(a,b)$ is $(2,0)$, the value is $\{0,*|*\}=\uparrow*$.
If $(a,b)$ is $(0,2)$, the value is $\{0|0,*\}=\downarrow*$.
So let $n\geq 3$ and suppose it holds true for values less than $n$. Let us prove for $n=a+b$.
Note that
\begin{itemize}
\item $(a,0)\ =\ \left\{0,(a-1,0)\ \big|\ 0\right\}\ =\ \left\{0,\uparrow^{[a-2]}*\ \big|\ 0\right\}\ =\ \uparrow^{[a-1]}*$
\item $(0,b)\ =\ \left\{0\ \big|\ 0,(0,b-1)\right\}\ =\ \left\{0\ \big|\ 0,\downarrow_{[b-2]}*\right\}\ =\ \downarrow_{[b-1]}*$
\end{itemize}
by induction and by Theorem \ref{thm-basic2}.
Thus assume that $a,b\geq 1$ and $a+b\geq 2$.
Note that:
\begin{itemize}
\item $(a,a-2)\ =\ \left\{0,(a-1,a-2)\ \big|\ 0,(a,a-3)\right\}\ =\ \left\{0,*\ \big|\ 0,\uparrow^{[2]}*\right\}\ =\ \uparrow*$
\item $(a,a-1)\ =\ \left\{0,(a-1,a-1)\ \big|\ 0,(a,a-2)\right\}\ =\ \left\{0,*2\ \big|\ 0,\uparrow*\right\}\ =\ *$
\item $(a,a)\ =\ \left\{0,(a-1,a)\ \big|\ 0,(a,a-1)\right\}\ =\ \left\{0,*\ \big|\ 0,*\right\}\ =\ *2$
\item $(a,a+1)\ =\ \left\{0,(a-1,a+1)\ \big|\ 0,(a,a)\right\}\ =\ \left\{0,\downarrow*\ \big|\ 0,*2\right\}\ =\ *$
\item $(a,a+2)\ =\ \left\{0,(a-1,a+2)\ \big|\ 0,(a,a+1)\right\}\ =\ \left\{0,\downarrow_{[2]}*\ \big|\ 0,*\right\}\ =\ \downarrow*$
\end{itemize}
If $a\geq b+3$, then
\begin{itemize}
\item $(a,b) = \left\{0,(a-1,b)\ \big|\ 0,(a,b-1)\right\} = \left\{0,\uparrow^{[a-b-2]}*\ \big|\ 0,\uparrow^{[a-b]}*\right\}\ =\ \uparrow^{[a-b-1]}*$
\end{itemize}
If $b\geq a+3$, then
\begin{itemize}
\item $(a,b) = \left\{0,(a-1,b)\ \big|\ 0,(a,b-1)\right\} = \left\{0,\downarrow_{[b-a-2]}*\ \big|\ 0,\downarrow_{[b-a]}*\right\}\ =\ \downarrow_{[b-a-1]}*$
\end{itemize}
\end{proof}

\begin{theorem}\label{thm-star4}
Consider the game is a star $K_{1,n}$ with $a$ leaves with color $A$, $b$ leaves with color $B$ and $c=n-a-b\geq 1$ leaves with color $C$ such that the universal vertex has color $A$.
If $a=0$, then the value is $\Gamma(b+c)$. If $a>0$, the value is $J$ if $c$ is even and it is $J*$ if $c$ is odd, where $J$ is the value removing the leaves of color $C$.
In the case of universal vertex with color $B$, the value is $-J$, where $J$ is the value changing the colors $A$ and $B$.
\end{theorem}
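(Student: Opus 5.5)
The plan is to first pin down the positions that arise, and then run an induction whose options are all known by earlier results or the inductive hypothesis. Label a position by $(a,b,c)$, the numbers of \emph{undominated} leaves of each color, with the universal vertex $u$ of color $A$.

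\textbf{The reachable positions.} Any leaf move dominates $u$ and that leaf only. The vertex $u$ stays playable for Alice while some leaf is undominated, and playing $u$ ends the game. So every position reached before $u$ is played is again of the form $(a,b,c)$. The options are:
\begin{itemize}
\item Alice: $0$ (play $u$), $(a-1,b,c)$ (an $A$-leaf) and $(a,b,c-1)$ (a $C$-leaf);
\item Bob: $(a,b-1,c)$ and $(a,b,c-1)$.
\end{itemize}
For $c=0$ these positions are exactly those of Theorem~\ref{thm-star2}, including the degenerate ones with $a+b\le 1$, namely $(1,0,0)=1$, $(0,1,0)=\{0|0\}=*=\Gamma(1)$ and $(0,0,0)=0$. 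I would induct on $a+b+c$ and use these as base cases.

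\textbf{Case $a=0$.} By induction, every option $(0,b-1,c)$ and $(0,b,c-1)$ has value $\Gamma(k-1)$, where $k=b+c$. Hence
\[
(0,b,c)=\{0,\Gamma(k-1)\ |\ \Gamma(k-1)\}.
\]
Let $H=\{\Gamma(k-1)|\Gamma(k-1)\}$, which equals $\Gamma(k)$ by the remark after Theorem~\ref{thm-basic2}. Adding Left options only increases a game, so $G\ge H$. For $G\le H$ I would check the standard criterion.
\begin{itemize}
\item $0\ge H$ fails, since $\Gamma(k)>0$ for $k\ge 2$ and $\Gamma(1)=*$.
\item $\Gamma(k-1)\ge H$ fails, because Right wins moving first in $\Gamma(k-1)-H$ by moving to $\Gamma(k-1)-\Gamma(k-1)=0$.
\item $H^R=\Gamma(k-1)\le G$ fails symmetrically, using Bob's option $\Gamma(k-1)$ in $G$.
\end{itemize}
Thus $(0,b,c)=\Gamma(b+c)$.

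\textbf{Case $a\ge1$.} Let $J(a,b)$ be the value from Theorem~\ref{thm-star2}; it is always a dyadic number. By induction on $c$,
\[
(a,b,c)=\{0,\ J(a-1,b)+\Delta(c),\ J(a,b)+\Delta(c-1)\ \ |\ \ J(a,b-1)+\Delta(c),\ J(a,b)+\Delta(c-1)\}.
\]
When $a=1$, the term $J(0,b)$ is replaced by $\Gamma(b+c)$ from the previous case. When $b=0$, Bob's first option is absent.

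For $c$ odd, the target is $J*=\{J|J\}$. I would show that $G-J*=0$ by checking that each player loses moving first. The move to $J+\Delta(c-1)=J$ is answered by moving to $J-J*$'s mirror, giving $0$. The other options are answered using the strict inequalities
\[
J(a-1,b)<J(a,b)<J(a,b-1)
\]
from Theorem~\ref{thm-star2}, together with the number-avoidance principle for sums of a number with $*$. Alice's option $0$ is handled by $0<J$ (for $a\ge1$, $J$ is positive), so moving there is losing for her. For $c$ even, the target is $J$, and the same comparison shows that $J$'s canonical options are matched.

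\textbf{Main obstacle.} The hardest part is the boundary cases of Theorem~\ref{thm-star2}, where the value formula changes:
\begin{itemize}
\item $a=1$, where the Alice option $J(0,b)+\Delta(c)$ becomes the infinitesimal $\Gamma(b+c)$ rather than a number;
\item $b\in\{a-1,a,a+1\}$.
\end{itemize}
In these cases number avoidance must be replaced by explicit comparisons. At $a=1$ the comparison uses $\Gamma(m)<1/2^j$ for any $j$, since $\Gamma(m)$ is infinitesimal, so Alice's option is still dominated by moving elsewhere.

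\textbf{Universal vertex of color $B$.} This case follows by swapping the roles of Alice and Bob, which negates the value.
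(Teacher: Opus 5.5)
Your route is essentially the paper's. You use the same recursion on $(a,b,c)$. For $a\ge 1$ you use the same comparisons: $0<J(a,b)$, $J(a-1,b)<J(a,b)<J(a,b-1)$ with nonzero dyadic differences, and the infinitesimality of $\Gamma$. The paper uses these to discard dominated options and reduce to $\{J+\Delta(c-1)\,|\,J+\Delta(c-1)\}$; you instead play out $G-J-\Delta(c)$ directly. That part is sound, and the cases $b\in\{a-1,a,a+1\}$ need no special treatment.

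There is a gap in the case $a=0$ at $b+c=2$.

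\begin{itemize}
\item You set $H=\{\Gamma(k-1)\,|\,\Gamma(k-1)\}$ and assert $H=\Gamma(k)$ by the remark after Theorem~\ref{thm-basic2}. That remark rests on Theorem~\ref{thm-basic1}, which needs $n\ge 1$, so you may only apply it when $k-1\ge 2$.
\item For $k=2$, i.e.\ the positions $(0,1,1)$ and $(0,0,2)$, you get $H=\{*\,|\,*\}=0\neq\uparrow=\Gamma(2)$. Your first bullet ``$0\ge H$ fails'' is then false, and the sandwich $H\le G\le H$ breaks.
\item In fact the roles of the options are reversed there. In $G=\{0,*\,|\,*\}$ the Left option $*$ is the superfluous one: it reverses through $0$, because $G\ge 0$. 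The option $0$ is the one that carries the value, giving $G=\{0\,|\,*\}=\uparrow$.
\item At $k=1$ the option is $(0,0,0)=0$, not $\Gamma(0)=\downarrow$, so ``every option has value $\Gamma(k-1)$'' also fails there.
\end{itemize}

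The repair is easy. Compute $(0,0,1)=\{0\,|\,0\}=*$ and $(0,1,1)=(0,0,2)=\{0,*\,|\,*\}=\uparrow$ directly as base cases, as the paper does. Then run your criterion argument only for $b+c\ge 3$, where $\{\Gamma(k-1)\,|\,\Gamma(k-1)\}=\Gamma(k)$ is valid. With that patch the rest of your proof goes through, including the $a=1$ step, which only uses that $(0,b,c)=\Gamma(b+c)$ is infinitesimal.
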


\begin{proof}
By induction on $n=a+b+c$. Let us represent the game as $(a,b,c)$.
If $(a,b,c)$ is $(0,0,1)$, then the value is $\{0|0\}=*$.
If $(a,b,c)=(1,0,1)$, the value is $\{1|1\}=1*$.
If $(a,b,c)$ is $(0,1,1)$ or $(0,0,2)$, the value is $\{0,*|*\}=\uparrow$.
Thus let $n\geq 3$ and suppose it holds true for values less than $n$. Let us prove for $n=a+b+c$.
Note that $(0,b,c)=\left\{0,(0,b,c-1)\ \big|\ (0,b-1,c),(0,b,c-1)\right\}$. 
If $b+c$ is even:
\begin{itemize}
\item $(0,b,c)\ =\ \left\{0,(n-2)\uparrow *\ \big|\ (n-2)\uparrow *\right\}\ =\ (n-1)\uparrow$.
\end{itemize}
If $b+c$ is odd, then:
\begin{itemize}
\item $(0,b,c)\ =\ \left\{0,(n-2)\uparrow\ \big|\ (n-2)\uparrow\right\}\ =\ (n-1)\uparrow *$.
\end{itemize}
Thus suppose that $a>0$. Note that $(a,b,c)\ =\ \{0,(a-1,b,c),(a,b,c-1)\ \big|\ (a,b-1,c),(a,b,c-1)\}$ and that $(a,b,c-1)>(a,b,c-1)>0$ and $(a,b,c-1)>(a-1,b,c)$. Therefore, $(a,b,c)=\{(a,b,c-1)\ |\ (a,b,c-1)\}$. If $c$ is even, then
\begin{itemize}
\item $(a,b,c)\ =\ \left\{(a,b)*\ \big|\ (a,b)*\right\}\ =\ (a,b)$,
\end{itemize}
since $(a,b)>1/2^x$ for some integer $x$ by Theorem \ref{thm-star2}.
Similarly, if $c$ is odd:
\begin{itemize}
\item $(a,b,c)\ =\ \left\{(a,b)\ \big|\ (a,b)\right\}\ =\ (a,b)*$.
\end{itemize}

\end{proof}

\begin{theorem}\label{thm-star5}
Consider the game in a star $K_{1,n}$ with $a$ leaves with color $A$, $b$ leaves with color $B$ and $c=n-a-b\geq 1$ leaves with color $C$ such that the universal vertex has color $C$ and $a+b\geq 1$.
If $c$ is even, the value is the same removing the leaves with color $C$.
So assume $c$ is odd.
If $a=b$, the value is $*$. If $|a-b|=1$, the value is $*2$.
If $a\geq b+2$, the value is $\left\{0,\uparrow^{[a-b-1]}*\ \big|\ 0,\uparrow^{[a-b-1]}*\right\}$.
If $b\geq a+2$, the value is $\left\{0,\downarrow_{[b-a-1]}*\ \big|\ 0,\downarrow_{[b-a-1]}*\right\}$.
\end{theorem}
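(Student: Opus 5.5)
We prove the statement by induction on $n=a+b+c$, writing the position as $(a,b,c)$ with centre coloured $C$, and writing $(a,b)$ for the value given by Theorem~\ref{thm-star3} with the $C$-leaves removed.

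\textbf{Options.} Any move on the centre dominates everything and gives $0$. Any move on a leaf leaves a star with one fewer leaf. So
\[
(a,b,c)=\left\{0,(a-1,b,c),(a,b,c-1)\ \big|\ 0,(a,b-1,c),(a,b,c-1)\right\},
\]
where an option is dropped when the corresponding count is $0$. The base cases are the positions with $a+b\geq 1$ and $c\in\{1,2\}$. For $c=1$, $(a,b,0)=(a,b)$ is known from Theorem~\ref{thm-star3}. For $c=2$, the value $(a,b,1)$ comes from the $c=1$ case. We must also handle the boundary $a+b=1$, where one leaf option leads to $(0,0,c)$, which is the all-$C$ star of Theorem~\ref{thm-star1}: its value is $*$ if $c$ is odd and $*2$ if $c$ is even.

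\textbf{Case $c$ odd.} The claimed value is $\{0,(a,b)\mid 0,(a,b)\}$, which specialises as follows.
\begin{itemize}
\item For $a=b$ it is $\{0,*2\mid 0,*2\}=*$.
\item For $|a-b|=1$ it is $\{0,*\mid 0,*\}=*2$.
\item Otherwise it is the stated expression.
\end{itemize}
By induction, the option $(a,b,c-1)$ equals $(a,b)$, since $c-1$ is even. It remains to show the options $(a-1,b,c)$ and $(a,b-1,c)$ can be deleted. This should go through domination and reversibility: each of these options reverses through $0$, or is dominated by $0$ or by $(a,b)$. I would check this with difference games, as in the proofs of Theorems~\ref{thm-basic2} and~\ref{thm-basic3}. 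The cleanest route is to show directly that the full option list sums to zero against the negative $\{0,-(a,b)\mid 0,-(a,b)\}$. The second player wins by mirroring, and answers an extra leaf move by moving to $0$ or by the induction hypothesis.

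\textbf{Case $c$ even.} The claim is $(a,b,c)=(a,b)$. By induction, $(a,b,c-1)=\{0,(a,b)\mid 0,(a,b)\}$, which I will call $D$. We also have $(a-1,b,c)=(a-1,b)$ and $(a,b-1,c)=(a,b-1)$, so
\[
(a,b,c)=\{0,(a-1,b),D\mid 0,(a,b-1),D\}.
\]
Compare this with $(a,b)=\{0,(a-1,b)\mid 0,(a,b-1)\}$ from the proof of Theorem~\ref{thm-star3}. We need $D$ to be a reversible option on both sides. Alice moving to $D$ is reversed by Bob's reply to $(a,b)$, since $(a,b)\leq (a,b)$. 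After that reversal, Alice's replacement options are the left options of $(a,b)$, which are already present. Bob's moves to $D$ reverse symmetrically. This reversal argument is exactly the identity in the last part of Theorem~\ref{thm-basic2}, with $(a,b)$ in place of $\uparrow^{[n]}*$, and it holds in general by Conway's reversibility.

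\textbf{Main obstacle.} The hard step is the odd case: showing $(a-1,b,c)$ and $(a,b-1,c)$ are removable. These are nested games $\{0,(a-1,b)\mid 0,(a-1,b)\}$ rather than canonical values, so comparisons like $\uparrow^{[k]}*$ versus $\{0,\uparrow^{[k-1]}*\mid\ldots\}$ require case analysis by $a-b$ (values $0$, $\pm1$, $\pm2$, $\geq 3$). I would first try the pure reversal argument: Alice's move to $(a-1,b,c)$ is answered by Bob moving to $0$, which is at most the original position exactly when the position is not positive. That gives reversal to nothing and removes the option, at least when $(a,b)$ is confused with $0$, which holds in all listed cases, since all of them are fuzzy or star-like.
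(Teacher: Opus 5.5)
The gap is in the odd case, which is the heart of the theorem. Your reduction is right: by induction, $(a,b,c)=\{0,(a-1,b,c),(a,b)\mid 0,(a,b-1,c),(a,b)\}$, and everything hinges on deleting $(a-1,b,c)$ and $(a,b-1,c)$. But you never carry out that step, and the mechanism you propose for it cannot work.

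Reversing Alice's option $(a-1,b,c)$ through Bob's reply $0$ requires $0\le (a,b,c)$, i.e.\ that Alice wins $(a,b,c)$ when Bob starts. But Bob starts by selecting the centre and wins at once, so $(a,b,c)\not\geq 0$ in every case. Being confused with $0$ is precisely what rules this reversal out; you have the condition backwards.

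Your mirroring sketch has the same hole. In $(a,b,c)-\{0,(a,b)\mid 0,(a,b)\}$, suppose Alice moves to $(a-1,b,c)$. If Bob replies to $0$ in either component, Alice takes the centre of the other and wins. Bob's only candidate reply is the move to $(a-1,b,c)-(a,b)$, which wins if and only if $(a-1,b,c)\le (a,b)$. Symmetrically, Alice needs $(a,b)\le (a,b-1,c)$.

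So the missing idea is the domination $(a-1,b,c)\le (a,b,c-1)=(a,b)\le (a,b-1,c)$. This is exactly what the paper's proof asserts. It writes strict inequalities, but equality can occur: for $a=b$ and $c$ odd all three options equal $*2$. Only the weak form is needed.

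You can supply it as follows. Put $P=(a-1,b)$ and $Q=(a,b-1)$. Induction gives $(a-1,b,c)=\{0,P\mid 0,P\}$ and $(a,b-1,c)=\{0,Q\mid 0,Q\}$, while $(a,b)=\{0,P\mid 0,Q\}$. In each of the two difference games with Bob moving first, every Bob move is answered either by a mirror move or by a move to $0$, except one, whose answer leaves $Q-P$. Hence both inequalities follow from $P\le Q$. That holds because $(a,b-1)$ arises from $(a-1,b)$ by recolouring one $B$-leaf as $A$, which only adds Left options and removes Right options at every position. The boundary $a+b=1$, where $(0,0,c)$ comes from Theorem~\ref{thm-star1}, is a direct check.

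Your even case, on the other hand, is sound and takes a genuinely different route from the paper. The paper first deletes the leaf options by the same domination, then invokes the last identity of Theorem~\ref{thm-basic2}, plus separate computations for $a=b$ and $|a-b|=1$. You instead keep those options as $(a-1,b)$ and $(a,b-1)$, and use that appending $D=\{0,(a,b)\mid 0,(a,b)\}$ to both sides of $(a,b)$ leaves the value unchanged. This is uniform in $a,b$ and needs no domination.

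Three corrections to that part:
\begin{itemize}
\item It is not the identity of Theorem~\ref{thm-basic2}, which is $H=\{0,D\mid 0,D\}$.
\item Conway's reversibility needs $(a,b)\le G$, not $(a,b)\le (a,b)$. The cleanest justification is to check $\{H^L,D\mid H^R,D\}-H=0$ directly by mirroring.
\item For $a+b=1$, the leaf option $(0,0,c)$ equals $*2$ rather than the corresponding option $0$ of $(a,b)$, so that case needs its own line. It still works, since $*2$ has $*=(a,b)$ as both a Left and a Right option.
\end{itemize}
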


\begin{proof}
By induction on $n=a+b+c$. Let us represent the game as $(a,b,c)$.
If $(a,b,c)$ is $(1,0,1)$ or $(0,1,1)$, the value is $\{0,*|0,*\}=*2$.
If $(a,b,c)$ is $(1,1,1)$ or $(1,0,2)$ or $(0,1,2)$, the value is $\{0,*2|0,*2\}=*$.
If $(a,b,c)$ is $(2,0,1)$, the value is $\{0,\uparrow*,*2|0,\uparrow*\}=\{0,\uparrow*|0,\uparrow*\}$.
If $(a,b,c)$ is $(0,2,1)$, the value is $\{0,\downarrow*,*2|0,\downarrow*\}=\{0,\downarrow*|0,\downarrow*\}$.
Thus let $n\geq 4$ and suppose it holds true for values less than $n$. Let us prove for $n=a+b+c$.
Note that $(a,b,c)=\{0,(a-1,b,c),(a,b,c-1)\ |\ 0,(a,b-1,c),(a,b,c-1)\}$ and that $(a-1,b,c)<(a,b,c-1)<(a,b-1,c)$.
Therefore, if $c$ is odd, then
\begin{itemize}
\item $(a,b,c)\ =\ \left\{0,(a,b)\ \big|\ 0,(a,b)\right\}$
\end{itemize}
and the result follows from Theorem \ref{thm-star3}.
Thus suppose that $c$ is even. Therefore:
$$(a,b,c)\ =\ \left\{0,\ \{0,(a,b)\ |\ 0,(a,b)\}\ \ \ \Big|\ \ \ 0,\{0,(a,b)\ |\ 0,(a,b)\}\right\}.$$
If $a=b$, then by Theorem \ref{thm-star3}
\begin{itemize}
\item $(a,b,c)\ =\ \left\{0,\{0,*2|0,*2\}\ \big|\ 0,\{0,*2|0,*2\}\right\}\ =\ \{0,*|0,*\}\ =\ *2$.
\end{itemize}
If $|a-b|=1$, then by Theorem \ref{thm-star3}
\begin{itemize}
\item $(a,b,c)\ =\ \left\{0,\{0,*|0,*\}\ \big|\ 0,\{0,*|0,*\}\right\}\ =\ \{0,*2|0,*2\}\ =\ *$.
\end{itemize}
Otherwise, by Theorem \ref{thm-star3}, $(a,b)$ is $\uparrow^{[a-b-1]}*$ or $\downarrow_{[a-b-1]}*$.
Therefore, by Theorem \ref{thm-basic2}, $(a,b,c) = (a,b)$.
\end{proof}

\section{Complete Bipartite Graphs}\label{sec-bip}

Notice that stars are complete split graphs, which are solved in Section \ref{sec-split}, but they are also complete bipartite graphs.
In this section, we solve the remaining cases of complete bipartite graphs, that is, the cases in which both parts have at least two vertices.

The next theorem deals with some initial colorings of complete bipartite graphs.

\begin{theorem}\label{thm-bip1}
Consider the Domination game in the complete bipartite graph $K_{s,t}$ with parts $S$ and $T$ with sizes $|S|=s\geq 2$ and $|T|=t\geq 2$.
If every vertex of $S\cup T$ has color $A$ (resp. $B$), the value is $\max\{s,t\}$ (resp. $-\max\{s,t\}$).
If $S$ and $T$ have at least (i) one vertex of color $C$ or (ii) one vertex of color $A$ and one vertex of color $B$, then the value is $0$. If every vertex of $S$ has color $A$ (resp. $B$) and every vertex of $T$ has color $B$ (resp. $A$), then the value is $0$.
\end{theorem}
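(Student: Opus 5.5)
The plan is to first describe exactly what the position looks like after one move in $K_{s,t}$, and then settle each case either by a short strategy argument or by a direct computation of the value.

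\textbf{Structure after the first move.} Suppose the first selected vertex is $x\in S$. Then $x$ dominates itself and all of $T$. The only undominated vertices are those of $S\setminus\{x\}$, and this set is nonempty because $s\geq 2$. Two kinds of move remain:
\begin{itemize}
\item any vertex $y\in T$ is playable, since it dominates all of $S$, and selecting it ends the game;
\item any $x'\in S\setminus\{x\}$ is playable, but it dominates only itself among the still undominated vertices.
\end{itemize}
So the remaining position consists of ``tokens'' $S\setminus\{x\}$, each removable only by a player allowed to use its colour, together with ``terminating'' vertices $T$. The same picture holds with the roles of $S$ and $T$ exchanged, using $t\geq 2$.

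\textbf{The value $0$ cases.} I show that the first player always loses, by a mirror-free strategy for the second player. If the first player selects a vertex in $S$, the second player answers with a vertex of $T$ of a colour available to them. The paragraph above shows this vertex is playable (here $s\geq 2$ is used), and it completes a dominating set, so the first player has no move and loses. Symmetrically, a first move in $T$ is answered in $S$ (using $t\geq 2$). Such an answer exists in each of the listed situations:
\begin{itemize}
\item if each part contains a $C$ vertex, either player can reply in either part;
\item if each part contains an $A$ vertex and a $B$ vertex, the same holds;
\item if $S$ is all $A$ and $T$ is all $B$, then Alice can only open in $S$ and Bob answers in $T$, while Bob can only open in $T$ and Alice answers in $S$ (the case $S$ all $B$, $T$ all $A$ is symmetric).
\end{itemize}
A first player with no legal move also loses. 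Hence the game is $0$.

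\textbf{The all-$A$ case (all-$B$ is its negative).} Here Bob never has a move, so the value is the integer $\{\,\text{Alice's options}\mid\,\}$. Let $P(k)$ be the position after Alice's first move into one part, where $k\geq 1$ vertices of that part remain undominated and $m\geq 2$ vertices of the other part are available to end the game. In $P(k)$ Alice's options are:
\begin{itemize}
\item $0$, by playing in the other part;
\item $P(k-1)$ if $k\geq 2$, by playing a remaining token;
\item $0$ again, by playing the last token when $k=1$.
\end{itemize}
By induction, $P(k)=\{0,k-1\mid\,\}=k$.

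The initial position therefore has Alice options $P(s-1)=s-1$ (from a first move in $S$) and $P(t-1)=t-1$ (from a first move in $T$), together with nothing else. Its value is $\{s-1,t-1\mid\,\}=\max\{s,t\}$. Negating, the all-$B$ colouring gives $-\max\{s,t\}$.

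\textbf{Main obstacle.} The only delicate step is checking that, in the intermediate position, a vertex of $T$ is still playable and immediately ends the game. This is precisely where the hypotheses $s,t\geq 2$ enter. Everything else is the elementary induction above.
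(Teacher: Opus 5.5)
Your proof is correct and is essentially the paper's. In the value-$0$ cases you use the same strategy of replying in the other part, which is still playable since $s,t\geq 2$ and ends the game; the same reply also covers the mixed case (one part containing $C$, the other containing both $A$ and $B$) that your bullets don't list explicitly. In the all-$A$ case your $P(k)=\{0,k-1\mid\}=k$ is exactly the paper's auxiliary game $(s,t]=s$, giving $\{s-1,t-1\mid\}=\max\{s,t\}$.
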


\begin{proof}
Let us represent the game as $(s,t)$.
Also let $(s,t]$ be the game in which the vertices of $T$ do not need to be dominated (but they can be selected in order to dominate the vertices of $S$). Analogously for $[s,t)$.
First suppose that every vertex has color $A$.
Note that $(s,t]=\{0,(s-1,t]\ |\ \}$ for $s,t\geq 1$ and that $(0,t]=0$. Therefore, it is easy to check that $(s,t]=s$ and $[s,t)=t$ for every $s,t\geq 1$.
Let us prove by induction in $n=s+t$ the value of $(s,t)$. Note that $(n,0)$ and $(0,n)$ has value $n$, since every vertex must be dominated by itself. Let $n>1$ and assume it holds true for values less than $n$. Let us prove the result for $n=s+t$:
\begin{itemize}
\item $(s,t)\ =\ \{(s-1,t],[s,t-1)\ |\ \}\ =\ \max(s-1,t-1)+1\ =\ \max(s,t)$
\end{itemize}

Now suppose that $S$ and $T$ satisfy (i) or (ii).
If the first player selects a vertex of $S$ (resp. $T$), then the other player can select a vertex of $T$ (resp. $S$), wining the game, which therefore has value $0$.

Finally, if every vertex of $S$ has color $A$ (resp. $B$) and every vertex of $T$ has color $B$ (resp. $A$), then the second player can always make the first move in the part distinct from the part played by the first player, winning the game, which therefore has value $0$.
\end{proof}

The case of complete bipartite graphs in which both parts have at least two vertices, one part contains only one color and this color is $A$ or $B$ and the other part contains color $C$ or at least two colors is more complicated.

From now on, without loss of generality, we assume that $G$ is a complete bipartite graph with parts $S$ and $T$ such that all vertices of $S$ are colored $A$ and $T$ contains the color $C$ or at least two colors. Let $s=|S|$ and $t=|T|$. Also let $a$, $b$ and $c$ be the number of vertices in $T$ colored $A$, $B$ and $C$, respectively. Notice that $a+c\geq 1$ and $b+c\geq 1$, that is, Alice and Bob can always play in $T$ as first player.
We represent the game in $G$ as $(s,a,b,c)$.
Also let $[s,a,b,c)$ (resp. $(s,a,b,c]$) be the game in which the vertices of $S$ (resp. $T$) do not need to be dominated, but they can be selected in order to dominate the vertices of $T$ (resp. $S$).

\begin{lemma}\label{lem-esq}
Let $a$, $b$ and $c$ such that $a+c\geq 1$ and $b+c\geq 1$.
If $s\geq 2$, then $(s,a,b,c]\ =\ \uparrow^{[s-1]}*$. Moreover, $(1,a,b,c]=*$ and $(0,a,b,c]=0$.
\end{lemma}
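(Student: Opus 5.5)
The approach is to describe the options of $(s,a,b,c]$ explicitly, show that the game collapses to a one-parameter recursion in $s$, and then solve that recursion by induction using Theorem \ref{thm-basic2}.

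\textbf{Step 1: the options.} In $(s,a,b,c]$ only the vertices of $S$ must be dominated. A vertex is playable exactly when it dominates some still-undominated vertex of $S$.
\begin{itemize}
\item Every vertex of $T$ is adjacent to all of $S$. So while some vertex of $S$ is undominated, every vertex of $T$ is playable, and selecting one dominates all of $S$. The game then ends, giving the option $0$.
\item Since $a+c\geq 1$ and $b+c\geq 1$, both Alice and Bob have such a move into $T$ whenever $s\geq 1$.
\item A vertex of $S$ dominates only itself among the vertices of $S$, since $S$ is independent. It is therefore playable exactly when it is not yet dominated. Once a vertex of $T$ is played the game is over, so at any earlier position the undominated part of $S$ is precisely the set of unplayed vertices of $S$.
\item All vertices of $S$ have color $A$, so only Alice can play there. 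Doing so leads to $(s-1,a,b,c]$; the values $a,b,c$ are unchanged because no vertex of $T$ has been touched.
\end{itemize}
Hence, for $s\geq 1$,
\[
(s,a,b,c]\ =\ \left\{0,\ (s-1,a,b,c]\ \big|\ 0\right\},
\]
and $(0,a,b,c]=\{|\}=0$, since nothing needs to be dominated and no vertex is playable.

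\textbf{Step 2: small cases.} For $s=1$ we get $\{0,0\,|\,0\}=\{0|0\}=*$. For $s=2$ we get $\{0,*\,|\,0\}$. Writing $*=\uparrow^{[0]}*$, Theorem \ref{thm-basic2} with $n=1$ gives $\{0,\uparrow^{[0]}*\,|\,0\}=\uparrow^{[1]}*=\uparrow*$, matching the claim.

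\textbf{Step 3: induction on $s$.} Assume $(s-1,a,b,c]=\uparrow^{[s-2]}*$ for some $s\geq 3$. Then
\[
(s,a,b,c]\ =\ \left\{0,\uparrow^{[s-2]}*\ \big|\ 0\right\}\ =\ \uparrow^{[s-1]}*,
\]
by the second form of the identity $\uparrow^{[n]}*=\{0,\uparrow^{[n-1]}*\,|\,0\}$ in Theorem \ref{thm-basic2}, applied with $n=s-1\geq 1$. This completes the induction.

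\textbf{Main obstacle.} The only delicate point is Step 1. One must justify that a vertex of $T$ stays playable as long as $S$ is not fully dominated. One must also check that no other kind of move exists, in particular no move in $T$ that fails to end the game. Once the recursion is established, the rest is a direct appeal to Theorem \ref{thm-basic2}.
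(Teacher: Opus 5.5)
Your proposal is correct and follows essentially the same route as the paper: establish the recursion $(s,a,b,c]=\{0,(s-1,a,b,c]\mid 0\}$ with base values $0$ and $*$, then close the induction with the identity $\uparrow^{[n]}*=\{0,\uparrow^{[n-1]}*\mid 0\}$ from Theorem \ref{thm-basic2}. Your Step 1 just spells out more carefully than the paper why these are exactly the available options.
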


\begin{proof}
Notice that $(0,a,b,c]=0$, since every vertex is dominated, and $(1,a,b,c]=\{0|0\}=*$, since the first player (Alice or Bob) wins the game in the first move.
So, let $s\geq 2$.
Then $(s,a,b,c] = \{(s-1,a,b,c],0\ |\ 0\}$, since Alice and Bob can play in $T$ as first player, ending the game, and Alice can play in $S$, obtaining the game $(s-1,a,b,c]$.
Since $\uparrow^{[n]}*=\{\uparrow^{[n-1]}*,0|0\}$ by Theorem \ref{thm-basic2} for every $n\geq 1$, we have the result by induction.
\end{proof}

\begin{lemma}\label{lem-czero}
Let $s\geq 2$ and $a,b\geq 0$. Then
\[
  [s,a,b,0)\ =\ 
 \begin{cases}
    a-b, &\mbox{if $a>b$},\\
    1/2^{b-a+1}, &\mbox{if $b\geq a>0$},\\
    \Gamma(b), &\mbox{if $a=0$ and $b>0$},\\
    0, &\mbox{if $a=b=0$}.
 \end{cases}
\]
\end{lemma}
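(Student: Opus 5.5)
The plan is to show that $[s,a,b,0)$ is literally the same game tree as the star game of Theorem \ref{thm-star2}: a star whose universal vertex is colored $A$, with $a$ leaves colored $A$ and $b$ leaves colored $B$. The value can then be read off from that theorem, and only the cases with fewer than two leaves need separate treatment.

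First I would describe the positions of $[s,a,b,0)$. Since the vertices of $S$ need not be dominated, a position is determined by the undominated vertices of $T$. Every vertex of $S$ is adjacent to all of $T$, so selecting any vertex of $S$ dominates all of $T$ and ends the game. Such a move is available, and only to Alice since $S$ is colored $A$, exactly when some vertex of $T$ is still undominated. Because $T$ is an independent set, selecting a vertex $x\in T$ dominates, within $T$, only $x$ itself; the vertices of $S$ it also dominates are irrelevant. Hence $x$ is playable exactly while $x$ is undominated. It follows that from the position $[s,a',b',0)$ Alice's options are $0$ (playing in $S$, provided $a'+b'\geq 1$) and $[s,a'-1,b',0)$ (provided $a'\geq 1$). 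Bob's only options are $[s,a',b'-1,0)$ (provided $b'\geq 1$), because he cannot play in $S$. The hypothesis $s\geq 2$ is only used to guarantee $S\neq\emptyset$, and the number of vertices of $S$ never matters, since one move there ends the game.

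Next I would compare this with the star $K_{1,a+b}$ whose center $u$ is colored $A$. The center dominates every leaf, so it is playable, by Alice only, exactly while some leaf is undominated, and playing it ends the game. A leaf dominates only itself and $u$, and $u$ needs no separate attention: it stays playable until all leaves are dominated. Once all leaves are dominated, $u$ is dominated too, since it has a selected neighbour or was itself selected. So the star has exactly the same recursive option structure as $[s,a,b,0)$. By induction on $a+b$, the two games have equal values.

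For $a+b\geq 2$ the value then follows directly from Theorem \ref{thm-star2}. It gives $a-b$ if $a>b$, $1/2^{b-a+1}$ if $b\geq a\geq 1$, and $\Gamma(b)$ if $a=0$. The remaining small cases are checked by hand. If $a=b=0$, all of $T$ is already dominated and no vertex is playable, so the value is $0$. If $(a,b)=(1,0)$, the value is $\{0,0\,|\,\}=1=a-b$. If $(a,b)=(0,1)$, the value is $\{0\,|\,0\}=*=\Gamma(1)$. The only point I expect to need care is the claim that $u$ is a faithful stand-in for $S$: it is playable exactly when some leaf is undominated, and its own domination never prolongs the game.
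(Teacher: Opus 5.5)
Your proof is correct, but it takes a different route from the paper.

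The paper proves the lemma from scratch. It evaluates $[s,0,0,0)$, $[s,1,0,0)$, $[s,0,1,0)$, $[s,1,1,0)$ and the one-colour families $[s,a,0,0)=a$ and $[s,0,b,0)=\Gamma(b)$ directly. It then inducts on the recursion $[s,a,b,0)=\{[s,a-1,b,0)\,|\,[s,a,b-1,0)\}$, after discarding Alice's dominated option $0$, and evaluates each case with the simplicity rule for numbers. In effect it repeats, almost line by line, the computation in the proof of Theorem~\ref{thm-star2}.

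You instead show that for $a+b\geq 1$ the position $[s,a,b,0)$ has the same game tree as the star with an $A$-coloured centre, $a$ leaves coloured $A$ and $b$ leaves coloured $B$. All of $S$ behaves like a single $A$-coloured centre, and the centre's own domination is automatic after any first move. So Theorem~\ref{thm-star2} gives every case with $a+b\geq 2$, and only $a+b\leq 1$ needs a hand check.

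Your route is shorter and explains why the lemma's formula coincides verbatim with that of Theorem~\ref{thm-star2}, and why $s$ plays no role. The paper's route is self-contained. The price of yours is that it is only as complete as the written proof of Theorem~\ref{thm-star2}. That proof's case list never treats $(a,b)=(1,2)$: its $a=1$ bullets start at $k\geq 2$, and the remaining cases assume $a\geq 2$. The value $\{0,\uparrow\,|\,1/2\}=1/4$ is nonetheless the claimed one. The paper's direct proof covers all $a=1$, $b\geq 2$ at once via $\{\Gamma(b)\,|\,1/2^{b-1}\}=1/2^{b}$. However, its own list for $a\geq 2$ writes $a<b-1$ where $a\leq b-1$ is needed. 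Neither slip affects the truth of the statement.
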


\begin{proof}
Notice that $[s,0,0,0)=0$, since all vertices are dominated.
Also $[s,1,0,0)=\{0|\}=1$, since Alice wins in her first move and Bob has no move, and $[s,0,1,0)=\{0|0\}=*$, since the first player wins in the first move.
Furthermore, $[s,1,1,0)=\{\ 0,[s,0,1,0)\ |\ [s,1,0,0)\}=\{0,*|1\}=1/2$.
Also note that, if $a\geq 2$ then by induction $[s,a,0,0)=\{\ [s,a-1,0,0),0\ |\ \ \}=\{a-1|\ \}=a$.
Moreover, if $b\geq 2$, then, by induction and by Theorem \ref{thm-basic1}, $[s,0,b,0)=\{0\ |\ [s,0,b-1,0)\}=\{0\ |\ \Gamma(b-1)\}=\Gamma(b)$.
Thus, from now on, we may assume that $a,b\geq 1$ and $a+b\geq 3$.

With this, notice that
$[s,a,b,0)\ =\ \{0,[s,a-1,b,0)\ |\ [s,a,b-1,0)\}$.
Since $[s,a-1,b,0)>0$ by induction, then
$[s,a,b,0)\ =\ \{\ [s,a-1,b,0)\ |\ [s,a,b-1,0)\}$.
Therefore, if $a=1$, then by induction
$[s,1,b,0)=\{\ [s,0,b,0)\ |\ [s,1,b-1,0)\}=\{\Gamma(b)\ |\ 1/2^{b-1}\}=1/2^b=1/2^{b-a+1}$, since $a+b\geq 3$.
So, assume that $a\geq 2$.
If $a>b+1$, then
$[s,a,b,0)=\{a-1-b\ |\ a-b+1\}=a-b$.
If $a=b+1$, then
$[s,a,b,0)=\{1/2\ |\ 2\}=1=a-b$.
If $a=b$, then
$[s,a,b,0)=\{1/4\ |\ 1\}=1/2=1/2^{b-a+1}$.
If $a<b-1$, then
$[s,a,b,0)=\{1/2^{b-a+2}\ |\ 1/2^{b-a}\}=1/2^{b-a+1}$.
\end{proof}

We say that a value $x$ is \emph{infinitesimal} if $-1/2^n<x<1/2^n$ for every natural $n$. It is know \cite{siegel13} that $0$, $*$, $m\cdot\uparrow^{[n]}$ and $m\cdot\downarrow_{[n]}$ are infinitesimal for every $m,n\geq 1$.
The next lemma shows that $[s,a,b,0)-[s,x,y,0)$ is positive and not infinitesimal if $a\geq 1$ and $a-b>x-y$.

\begin{lemma}\label{lem-czero2}
Let $a\geq 1$ and $b,x,y\geq 0$ be integers such that $a-b>x-y$. Then $[s,a,b,0)\geq[s,x,y,0)+1/2^{a-b+2}$.
\end{lemma}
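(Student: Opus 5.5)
The plan is to use the closed form of Lemma~\ref{lem-czero} for both sides and compare dyadic numbers directly. The only non-numeric values that appear are $\Gamma(y)$ and $0$, and these are infinitesimal. Write $f(p,q)=[s,p,q,0)$. The hypothesis $a\geq 1$ places $f(a,b)$ in one of the two numeric branches of Lemma~\ref{lem-czero}: $f(a,b)=a-b$ if $a>b$, and $f(a,b)=1/2^{b-a+1}$ if $b\geq a\geq 1$. We then split according to which of the four branches of Lemma~\ref{lem-czero} describes $f(x,y)$. The strict inequality $a-b>x-y$ of integers gives $x-y\leq a-b-1$, and this is the only arithmetic input needed.

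\emph{Case $a>b$.} Here $f(a,b)=a-b\geq 1$, so it suffices to show $f(a,b)-f(x,y)\geq 1/2\geq 1/2^{a-b+2}$.
If $x>y$, then $f(x,y)=x-y\leq a-b-1$, so the difference is at least $1$.
If $y\geq x>0$, then $f(x,y)=1/2^{y-x+1}\leq 1/2$, so the difference is at least $1/2$.
If $x=0$ and $y>0$, then $f(x,y)=\Gamma(y)$, which is infinitesimal. Hence $\Gamma(y)<1/2$ and the difference exceeds $1/2$.
If $x=y=0$, the difference is $a-b\geq 1$.

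\emph{Case $b\geq a\geq 1$.} Here $f(a,b)=1/2^{b-a+1}$. The hypothesis forces $y-x\geq b-a+1\geq 1$, so $y>x$.
If $x\geq 1$, then $f(x,y)=1/2^{y-x+1}\leq 1/2^{b-a+2}$, so
\[
f(a,b)-f(x,y)\ \geq\ \frac{1}{2^{b-a+1}}-\frac{1}{2^{b-a+2}}\ =\ \frac{1}{2^{b-a+2}}.
\]
If $x=0$, then $f(x,y)=\Gamma(y)$ is infinitesimal, so $\Gamma(y)<1/2^{b-a+2}$. This gives a difference strictly greater than $1/2^{b-a+2}$.
When $a=b$ this margin is exactly $1/2^{a-b+2}=1/4$, as stated.

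To justify the inequalities involving $\Gamma(y)=(y-1)\uparrow+\Delta(y)$, we invoke the cited fact that $m\uparrow$ and $m\uparrow*$ are infinitesimal. Then for a positive dyadic $d$, the game $d-\Gamma(y)$ is positive, because a positive number exceeds every infinitesimal. Together with the fact that Lemma~\ref{lem-czero} gives genuine numbers in the other branches, this reduces every comparison to arithmetic of dyadic rationals.

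The main obstacle is the subcase $b>a$, where the margin must be tracked through the exponent. There, as computed, the guaranteed gap is $1/2^{b-a+2}$. For the statement's $1/2^{a-b+2}$ to be the intended bound in this regime, the exponent has to be read as $|a-b|+2$. The literal quantity exceeds $1$ when $b\geq a+2$, whereas $f(a,b)\leq 1/2$. I would therefore first check how the lemma is used later (comparing options of $(s,a,b,c)$), and confirm that the argument above, which is exactly the case analysis of the statement, supplies what is needed there.
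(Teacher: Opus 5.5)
Your argument is correct and matches the paper's proof: substitute the closed forms of Lemma~\ref{lem-czero}, split into $a>b$ and $b\geq a\geq 1$, and use that $\Gamma(y)$ is infinitesimal. Your sub-case bounds are in fact more accurate than the paper's blanket claim of a gap of at least $1$; for example, the gap is only $1/2$ when $a-b=1$ and $y\geq x\geq 1$.

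You are also right that the exponent must be read as $|a-b|+2$. The paper's own proof only gives $1/2^{b-a+2}$ when $b\geq a$, and the literal bound already fails at $b=a+1$, where it would demand $1/4\geq[s,x,y,0)+1/2$. The later uses in Lemma~\ref{lem-cdifz} and Theorems~\ref{thm-caso3} and~\ref{thm-caso5} need only some positive dyadic gap. There the options being compared differ by $[s,a,b,0)-[s,x,y,0)+*$, and that is positive once the first term is.
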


\begin{proof}
First assume that $a>b$. Then $[s,a,b,0)=a-b$ by Lemma \ref{lem-czero}. Then the possible values of $[s,x,y,0)$ are $x-y$, $1/2^{y-x+1}$, $\Gamma(y)$ and $0$, depending on the cases of Lemma \ref{lem-czero}. All this values differ by at least 1 from $a-b$, since $a-b>x-y$.

Now suppose that $b\geq a>0$ and consequently $y\geq x$. Then $[s,a,b,0)=1/2^{b-a+1}$. As before, the possible values of $[s,x,y,0)$ are $1/2^{y-x+1}$, $\Gamma(y)$ and $0$, depending on the cases of Lemma \ref{lem-czero}. All this values differ by at least $1/2^{b-a+2}$ from $1/2^{b-a+1}$.
\end{proof}

\begin{lemma}\label{lem-azero}
Let $s\geq 2$ and $c\geq 1$. Then $[s,0,b,c)=\Gamma(b+c)$.
\end{lemma}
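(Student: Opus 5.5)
The game $[s,0,b,c)$ lives on $K_{s,t}$ with every vertex of $S$ colored $A$. The vertices of $T$ must be dominated, but those of $S$ need not be. The moves are as follows. Any move in $S$ dominates all of $T$ and ends the game, and only Alice can make it. A move on a vertex of $T$ dominates only that vertex, since $S$ need not be dominated. Such a vertex then becomes unplayable, and every other undominated vertex of $T$ stays playable. So, as long as some vertex of $T$ is undominated, the options are:
\[
[s,0,b,c)\ =\ \left\{0,\ [s,0,b,c-1)\ \big|\ [s,0,b-1,c),\ [s,0,b,c-1)\right\},
\]
where a term is omitted if the corresponding count is $0$. This is exactly the recursion of the star with universal vertex colored $A$ and $a=0$ (Theorem \ref{thm-star4}). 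Indeed, the value $\Gamma(b+c)$ matches that theorem, so the plan is to redo the argument of Theorem \ref{thm-star4} in this setting, by induction on $n=b+c$.

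For the base cases I take $n=1$ and $n=2$.
\begin{itemize}
\item $[s,0,0,1)=\{0,0\,|\,0\}=*=\Gamma(1)$.
\item $[s,0,1,1)=\{0,[s,0,1,0)\,|\,[s,0,0,1),[s,0,1,0)\}=\{0,*\,|\,*\}=\uparrow=\Gamma(2)$, using Lemma \ref{lem-czero} for $[s,0,1,0)=*$.
\item $[s,0,0,2)=\{0,*\,|\,*\}=\uparrow$ likewise.
\end{itemize}

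For the inductive step, take $n\geq 3$ and $c\geq 1$. Every option of the form $[s,0,b',c')$ with $b'+c'=n-1$ has value $\Gamma(n-1)$. When $c'\geq 1$ this is the induction hypothesis. When $c'=0$ it is Lemma \ref{lem-czero}, which gives $[s,0,b',0)=\Gamma(b')$ for $b'\geq 1$. The case $b'=c'=0$ cannot occur, since $n-1\geq 2$. Hence
\[
[s,0,b,c)=\{0,\Gamma(n-1)\,|\,\Gamma(n-1)\},
\]
and it remains to identify this value. By the remark after the definition of $\Gamma$, $\{0|\Gamma(k)\}=\{\Gamma(k)|\Gamma(k)\}=\Gamma(k+1)$. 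The main point to check is that adding both $0$ and $\Gamma(n-1)$ to Alice's side gives that same value. I would argue that since both $\{0|\Gamma(n-1)\}$ and $\{\Gamma(n-1)|\Gamma(n-1)\}$ equal $\Gamma(n)$, one of the options $0$ and $\Gamma(n-1)$ dominates the other or is reversible. More directly, one can apply Theorem \ref{thm-basic1} as in the proof of Theorem \ref{thm-star4}:
\begin{itemize}
\item if $n$ is even, $\{0,(n-2)\uparrow*\,|\,(n-2)\uparrow*\}=(n-1)\uparrow$;
\item if $n$ is odd, $\{0,(n-2)\uparrow\,|\,(n-2)\uparrow\}=(n-1)\uparrow*$.
\end{itemize}

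To justify these directly, I would show that $G-\Gamma(n)=0$ by a short second-player-wins check, in the style of the proofs of Theorems \ref{thm-basic3} and \ref{thm-basic2}. Alternatively, when $n$ is odd, $(n-2)\uparrow>0$ makes $0$ dominated, and Theorem \ref{thm-basic1} finishes. When $n$ is even, $0$ and $(n-2)\uparrow*$ are incomparable, so this parity case is the main obstacle. There I would verify that $G+(n-1)\downarrow=0$ by direct play, using $\uparrow*=\{0,*|0\}$-type facts. This completes the induction.
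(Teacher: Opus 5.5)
Your proposal is correct and follows essentially the same route as the paper: the same option recursion, Lemma~\ref{lem-czero} for the options with $c'=0$, and Theorem~\ref{thm-basic1} to evaluate $\{0,\Gamma(n-1)\,|\,\Gamma(n-1)\}$; the paper merely splits into $b=0$ and $b\geq 1$ instead of inducting on $b+c$. One correction: the even case is not an obstacle, because for $n\geq 4$ Theorem~\ref{thm-basic1} gives $(n-2)\uparrow*=\{0\,|\,(n-3)\uparrow\}>0$, so $0$ is dominated in both parities (incomparability with $0$ occurs only at $n=2$, which you already treat as a base case), and in any event $\{X\,|\,Z\}=\{Y\,|\,Z\}$ always implies that $\{X,Y\,|\,Z\}$ has that same value.
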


\begin{proof}
Recall from Lemma \ref{lem-czero} that $[s,0,b,0)=\Gamma(b)$ for $b\geq 1$.
First assume that $b=0$. 
If $c=1$, then $[s,0,0,1)=\{0|0\}=*=\Gamma(1)$, since the first player always wins in the first move.
So, suppose that $c\geq 2$.
Then $[s,0,0,c)=\{0,[s,0,0,c-1)\ |\ [s,0,0,c-1)\}=\{0,\Gamma(c-1)\ |\ \Gamma(c-1)\}=\Gamma(c)$ by induction and by Theorem \ref{thm-basic1}.

Now assume that $b\geq 1$. 
Then, $[s,0,b,c)\ =\ \{0,[s,0,b,c-1)\ |\ [s,0,b-1,c),[s,0,b,c-1)\}$. Thus, by induction and by Theorem \ref{thm-basic1}, $[s,0,b,c)\ =\ \{0,\Gamma(b+c-1)|\Gamma(b+c-1)\}\ =\ \Gamma(b+c)$.

\end{proof}

\begin{lemma}\label{lem-cdifz}
Let $s\geq 2$, $a\geq 1$ and $b,c\geq 0$. Then $[s,a,b,c)\ =\ [s,a,b,0)+\Delta(c)$.
\end{lemma}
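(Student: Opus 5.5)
The plan is to argue by induction on $a+b+c$, following the same pattern as the proof of Theorem~\ref{thm-star4}. For $c=0$ there is nothing to prove, since $\Delta(0)=0$. So assume $c\geq 1$ and write $x=[s,a,b,0)$. Because $a\geq 1$, Lemma~\ref{lem-czero} shows that $x$ is a positive dyadic number.

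First I will list the options of $[s,a,b,c)$. Alice may select a vertex of $S$; this dominates all of $T$ and ends the game, giving $0$. She may instead select a vertex of $T$ colored $A$, giving $[s,a-1,b,c)$, or one colored $C$, giving $[s,a,b,c-1)$. Bob may select a vertex of $T$ colored $B$, giving $[s,a,b-1,c)$ when $b\geq 1$, or one colored $C$, giving $[s,a,b,c-1)$. Hence
\[
[s,a,b,c)=\{0,[s,a-1,b,c),[s,a,b,c-1)\ |\ [s,a,b-1,c),[s,a,b,c-1)\}.
\]
By the induction hypothesis, the $C$-move option equals $x+\Delta(c-1)$. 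Alice's $A$-move option equals $[s,a-1,b,0)+\Delta(c)$ if $a\geq 2$; if $a=1$ it equals $\Gamma(b+c)$ by Lemma~\ref{lem-azero}. Bob's $B$-move option equals $[s,a,b-1,0)+\Delta(c)$.

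The core step is a dominance argument showing that the $C$-move is the best option for both players. The differences between $\Delta(c)$ and $\Delta(c-1)$, and the values $\Gamma(\cdot)$, are infinitesimal. Therefore a gap of at least $1/2^k$ between the number parts decides every comparison.

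For Alice, $x+\Delta(c-1)$ beats $0$ and beats $\Gamma(b+c)$, because $x$ is a positive number and the other terms are infinitesimal. When $a\geq 2$, Lemma~\ref{lem-czero2} gives $x\geq [s,a-1,b,0)+1/2^{a-b+2}$, since $a-b>(a-1)-b$. Hence $x+\Delta(c-1)>[s,a-1,b,0)+\Delta(c)$.

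For Bob, Lemma~\ref{lem-czero2} applied with $(a,b-1)$ against $(a,b)$ gives $[s,a,b-1,0)\geq x+1/2^{a-b+3}$. Thus Bob's $B$-move is dominated by $x+\Delta(c-1)$.

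After removing the dominated options, we obtain
\[
[s,a,b,c)=\{x+\Delta(c-1)\ |\ x+\Delta(c-1)\}.
\]

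To finish, suppose first that $c$ is odd. Then the game is $\{x|x\}$, which equals $x*$ by number translation, since $*=\{0|0\}$. Now suppose that $c$ is even. Then the game is $\{x*|x*\}$, and I will show that $\{x*|x*\}-x=0$. By number avoidance, the first player in this difference moves in $\{x*|x*\}$ and reaches $x*-x=*$; the opponent then wins. This is the same step used in Theorem~\ref{thm-star4}.

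I expect the main obstacle to be the dominance step. Each comparison between a number plus an infinitesimal and another such value has to be justified carefully. The boundary cases need separate attention: $a=1$, where Alice's $A$-move leads to the infinitesimal $\Gamma(b+c)$ rather than a number, and $b=0$, where Bob has no $B$-move.
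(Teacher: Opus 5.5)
Your argument follows the paper's proof step for step. You use the same option list and the same appeal to Lemmas~\ref{lem-czero2} and~\ref{lem-azero} to discard everything but the $C$-move, reaching $\{x+\Delta(c-1)\,|\,x+\Delta(c-1)\}$; the paper then simply asserts this equals $x+\Delta(c)$. Your odd case is fine: translation applies because $*$ is not equal to a number. The even case, however, rests on a misapplied theorem. Number Avoidance requires the non-number summand to be \emph{not equal to any number}. But $\{x*\,|\,x*\}$ is equal to the number $x$ (that is exactly what you are proving), so the theorem cannot be used to rule out first moves in the $-x$ component. This is not a technicality. For $G=\{0,*\,|\,\}$, which equals $1$, Left's only winning first move in $G-\tfrac12$ is in the number. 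The repair is short; take $x$ in canonical form. If Left opens in $-x$, moving to $\{x*|x*\}-x^R$ with $x^R>x$, Right answers $x*-x^R=(x-x^R)+*<0$. If Right opens in $-x$, moving to $\{x*|x*\}-x^L$ with $x^L<x$, Left answers $x*-x^L=(x-x^L)+*>0$. Combined with your observation that an opening move in $\{x*|x*\}$ reaches $*$, this gives $\{x*|x*\}-x=0$.

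A smaller point concerns the explicit bounds you quote from Lemma~\ref{lem-czero2}, such as $[s,a,b-1,0)\geq x+1/2^{a-b+3}$ for Bob. They are false when $b$ is well above $a$: for $a=1$, $b=6$ this reads $1/32\geq 1/64+4$. This is inherited from a misprint in the statement of that lemma, whose exponent should be $|a-b|+2$ (its proof supports only that). Your dominance step needs only that the relevant differences are positive numbers, which you can read off directly from Lemma~\ref{lem-czero}, so nothing else in your argument changes.
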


\begin{proof}
Notice that $[s,1,0,c)=\{0,[s,1,0,c),[s,1,0,c-1)\ |\ [s,1,0,c-1)\}=\{[s,1,0,c-1)\ |\ [s,1,0,c-1)\}$ from Lemmas \ref{lem-czero2} and \ref{lem-azero}.
Moreover, if $a>1$, then $[s,a,0,c)=\{0,[s,a-1,0,c),[s,a,0,c-1)\ |\ [s,a,0,c-1)\}=\{[s,a,0,c-1)\ |\ [s,a,0,c-1)\}$ from Lemma \ref{lem-czero2}. Then, by induction, $[s,a,0,c)=\{\ [s,a,0,0)+\Delta(c-1)\ |\ [s,a,0,0)+\Delta(c-1)\}\ =\ a+\Delta(c)$ from Lemma \ref{lem-czero}.

So we may also assume that $b>0$.
Notice that $[s,1,b,c)=\{0,[s,0,b,c),[s,1,b,c-1)\ |\ [s,1,b-1,c), [s,1,b,c-1)\}=\{\Gamma(b+c),[s,1,b,0)+\Delta(c-1)\ |\ [s,1,b-1,0)+\Delta(c), [s,1,b,0)+\Delta(c-1)\}$ by induction and Lemmas \ref{lem-czero2} and \ref{lem-azero}.
Thus we may assume that $a\geq 2$.
Then, $[s,a,b,c)=\{0,[s,a-1,b,c),[s,a,b,c-1)\ |\ [s,a,b-1,c), [s,a,b,c-1)\}$. By induction, $[s,a,b,c)=\{0,[s,a-1,b,0)+\Delta(c), [s,a,b,0)+\Delta(c-1)\ |\ [s,a,b-1,0)+\Delta(c), [s,a,b,0)+\Delta(c-1)\}$. From Lemma \ref{lem-czero2}, $[s,a,b,c)=\{\ [s,a,b,0)+\Delta(c-1)\ |\ [s,a,b,0)+\Delta(c-1)\}=[s,a,b,0)+\Delta(c)$.
\end{proof}

\begin{theorem}\label{thm-caso1}
Let $s\geq 2$ and $c\geq 2$. Then $(s,0,0,c)$ is $0$, if $c=2$, and is $\Gamma(c)$, if $c>2$. 
\end{theorem}

\begin{proof}
Notice that $(s,0,0,2)=0$, since the first player always loses, and that $(s,0,0,3)=\{(s-1,0,0,3],[s,0,0,2)\ |\ [s,0,0,2)\}=\{\uparrow^{[s-2]}*,\uparrow\ |\ \uparrow\}=\Gamma(3)$ from Theorem \ref{thm-basic3}. 
Recall that $(s-1,0,0,c]=\uparrow^{[s-2]}*$ from Lemma \ref{lem-esq}.
Also notice that, if $c\geq 4$, then $(s,0,0,c)=\{(s-1,0,0,c],[s,0,0,c-1)\ |\ [s,0,0,c-1)\}=\{\uparrow^{[s-2]}*,\Gamma(c-1)\ |\ \Gamma(c-1)\}$ by Lemma \ref{lem-azero} and consequently $(s,0,0,c)=\{\Gamma(c-1)\ |\ \Gamma(c-1)\}=\Gamma(c)$ from Theorems \ref{thm-basic1} and \ref{thm-basic3}.
\end{proof}

\begin{theorem}\label{thm-caso2}
Let $s\geq 2$ and $a,b\geq 1$. Then $(s,a,b,0)$ is $a-b$, if $a>b$, and is $1/2^{b-a+1}$, if $a\leq b$.
\end{theorem}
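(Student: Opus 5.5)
The plan is to compute the options of $(s,a,b,0)$ directly and show that $(s,a,b,0)=[s,a,b,0)$. The stated formula then follows from Lemma \ref{lem-czero}, since $a,b\geq 1$ puts us in its first two cases. No induction on $(s,a,b,0)$ itself is needed, because every option leaves one of the two parts already dominated.

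\emph{Listing the options.} Every vertex of $S$ has color $A$, so Bob can only play in $T$, and since $c=0$ he must play one of the $b$ vertices of color $B$.
\begin{itemize}
\item If a player selects a vertex of $T$, it dominates all of $S$ and itself. The position becomes $[s,a-1,b,0)$ if Alice played, or $[s,a,b-1,0)$ if Bob played.
\item If Alice selects a vertex of $S$, it dominates all of $T$ and itself. The position becomes $(s-1,a,b,0]$, which by Lemma \ref{lem-esq} equals $\uparrow^{[s-2]}*$ when $s\geq 3$ and $*$ when $s=2$.
\end{itemize}
Hence
\[
(s,a,b,0)=\left\{(s-1,a,b,0],\ [s,a-1,b,0)\ \big|\ [s,a,b-1,0)\right\},
\]
and the left option $(s-1,a,b,0]$ is always infinitesimal.

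\emph{Case $a\geq 2$.} By Lemma \ref{lem-czero}, $[s,a-1,b,0)$ is a positive number: either $a-1-b$ with $a-1>b$, or $1/2^{b-a+2}$. A positive number exceeds every infinitesimal, so the left option $(s-1,a,b,0]$ is dominated and can be deleted. What remains is
\[
\left\{[s,a-1,b,0)\ \big|\ [s,a,b-1,0)\right\}.
\]
This is exactly the reduced form of the recursion for $[s,a,b,0)$ in the proof of Lemma \ref{lem-czero}, after its left option $0$ was discarded. Its value is therefore $a-b$ if $a>b$ and $1/2^{b-a+1}$ if $a\leq b$, using the same number computations as in that lemma.

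\emph{Case $a=1$, the main obstacle.} Here both left options are infinitesimal: $(s-1,1,b,0]$, and $[s,0,b,0)=\Gamma(b)$. Domination no longer decides the comparison, and $\Gamma(b)$ may be $(b-1)\uparrow>0$ while $\uparrow^{[s-2]}*$ is confused with $0$. The right option is the number $[s,1,b-1,0)$, which is $1$ if $b=1$ and $1/2^{b-1}$ if $b\geq 2$. I will apply the simplicity theorem: the game equals the simplest number $x$ with no left option $\geq x$ and no right option $\leq x$. The value $x=0$ is not admissible in general, since $\Gamma(b)\geq 0$ when $b$ is even. Every positive number lies above all infinitesimals, so every left option is admissible against any $x>0$. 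The right-option condition then requires $x<1/2^{b-1}$, or $x<1$ when $b=1$. The simplest number in the open interval is $1/2^{b}=1/2^{b-a+1}$, which is the claimed value since $a=1\leq b$.

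I will double-check the simplicity-theorem step against the alternative of verifying $(s,1,b,0)-1/2^b=0$ directly by a second-player-wins argument. The direct check is short: $\Gamma(b)-1/2^b<0$ and $\uparrow^{[s-2]}*-1/2^b<0$ because infinitesimals lie below positive numbers, and $1/2^{b-1}-1/2^b>0$.
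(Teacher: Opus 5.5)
Your reduction to $(s,a,b,0)=\{(s-1,a,b,0],\,[s,a-1,b,0)\mid[s,a,b-1,0)\}$ is right, and so is your case $a\geq 2$. There, deleting the dominated infinitesimal option shows $(s,a,b,0)=[s,a,b,0)$ outright, which is tidier than the paper's case-by-case number computations. Your case $a=1$, $b\geq 2$ is also correct. It is more careful than the paper, whose cases $a=b$ and $a<b$ use $[s,a-1,b,0)=1/2^{b-a+2}$ even when $a=1$, where that option is really $[s,0,b,0)=\Gamma(b)$.

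The gap is the sentence ruling out $x=0$. You justify it only by $\Gamma(b)\geq 0$ for even $b$. What is true is that $\Gamma(b)>0$ for every $b\geq 2$, which does exclude $x=0$ there, but for $b=1$ nothing excludes it. In that case the left options are $[s,0,1,0)=\Gamma(1)=*$ and $(s-1,1,1,0]$, which is $*$ or $\uparrow^{[s-2]}*$. Right moving first wins in each of these, so none is $\geq 0$. The right option $[s,1,0,0)=1$ is not $\leq 0$. Hence $0$ is admissible, and the simplicity theorem gives $(s,1,1,0)=\{\uparrow^{[s-2]}*,\ *\mid 1\}=0$, not $1/2$.

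This is not only a defect of your write-up: the statement itself is false for $a=b=1$. The paper's proof has the same hole, since its case $a=b$ uses the left option $1/4$, which for $a=1$ is $[s,0,1,0)=*$. You can confirm this by direct play on $K_{s,2}$ with $S$ colored $A$ and $T=\{t_A,t_B\}$.
\begin{itemize}
\item If Alice starts with a vertex of $S$, Bob answers with $t_B$, dominating the remaining $s-1\geq 1$ vertices of $S$, and Alice has no move.
\item If Alice starts with $t_A$, Bob answers with $t_B$, which dominates itself, and again Alice has no move.
\item If Bob starts, he must play $t_B$; Alice answers with $t_A$ and Bob has no move.
\end{itemize}
So the second player wins and the value is $0$.

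Your ``direct check'' missed this because it only examined the moves to $G^L-x$ and $G^R-x$. The moves inside the $-x$ component are exactly where minimality of $x$ is tested. For $b=1$, Right wins $G-1/2$ by moving to $G-0=G$, since Left, moving first in $G$, loses.

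The correct statement must exclude $(a,b)=(1,1)$, for which $(s,1,1,0)=0$. With that exception, your argument proves the rest.
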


\begin{proof}
Notice that $(s,a,b,0)=\{(s-1,a,b,0],[s,a-1,b,0)\ |\ [s,a,b-1,0)\}$.
Recall that $(s-1,a,b,0]=\uparrow^{[s-2]}*$ from Lemma \ref{lem-esq}.
Therefore, from Lemmas \ref{lem-czero}, we have the following cases.
If $a>b+1$, then $(s,a,b,0)=\{\uparrow^{[s-2]}*,a-b-1\ |\ a-b+1\}=a-b$.
If $a=b+1$, then $(s,a,b,0)=\{\uparrow^{[s-2]}*,1/2\ |\ 2\}=1=a-b$.
If $a=b$, then $(s,a,b,0)=\{\uparrow^{[s-2]}*,1/4\ |\ 1\}=1/2=1/2^{b-a+1}$.
If $a<b$, then $(s,a,b,0)=\{\uparrow^{[s-2]}*,1/2^{b-a+2}\ |\ 1/2^{b-a}\}=1/2^{b-a+1}$.
\end{proof}

\begin{theorem}\label{thm-caso3}
Let $s\geq 2$ and $a,c\geq 1$. Then $(s,a,0,c)=a+\Delta(c)$.
\end{theorem}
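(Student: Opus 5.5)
We follow the pattern of Theorems \ref{thm-caso1} and \ref{thm-caso2}: write down the options of $(s,a,0,c)$, evaluate them with the lemmas on the one-sided games, and then remove dominated options. There are no vertices of color $B$ in $T$, so Bob can only play the $C$-vertices of $T$. Alice can play a vertex of $S$, an $A$-vertex of $T$, or a $C$-vertex of $T$. A move in $S$ dominates all of $T$, and a move in $T$ dominates all of $S$. Hence
\[
(s,a,0,c)=\left\{(s-1,a,0,c],\ [s,a-1,0,c),\ [s,a,0,c-1)\ \big|\ [s,a,0,c-1)\right\},
\]
where $[s,a-1,0,c)$ is present only when $a\geq 1$, which holds here.

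We evaluate each option by the earlier lemmas.
\begin{itemize}
\item By Lemma \ref{lem-esq}, $(s-1,a,0,c]=\uparrow^{[s-2]}*$. This is $*$ when $s=2$ and is infinitesimal in general.
\item By Lemmas \ref{lem-cdifz} and \ref{lem-czero}, $[s,a,0,c-1)=a+\Delta(c-1)$.
\item If $a\geq 2$, the same lemmas give $[s,a-1,0,c)=a-1+\Delta(c)$.
\item If $a=1$, Lemma \ref{lem-azero} gives $[s,0,0,c)=\Gamma(c)$, which is infinitesimal.
\end{itemize}
So Bob's unique option is $a+\Delta(c-1)$.

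Next we show that Alice's first two options are dominated by her third option $a+\Delta(c-1)$. For $\uparrow^{[s-2]}*$ and $\Gamma(c)$, this holds because $a\geq 1$ and infinitesimals are less than $1\pm *$. For $a-1+\Delta(c)$, we have $a-1+\Delta(c)<a+\Delta(c-1)$, since the difference is $1$ plus a nimber in $\{0,*\}$, and $1+*>0$. These comparisons are essentially those already used in the proof of Lemma \ref{lem-cdifz}, so we can cite it, or Lemma \ref{lem-czero2}, together with the infinitesimal facts. After deleting the dominated options we get
\[
(s,a,0,c)=\{a+\Delta(c-1)\ |\ a+\Delta(c-1)\}.
\]

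For $c$ odd, $\Delta(c-1)=0$, so the value is $\{a|a\}=a+*=a+\Delta(c)$. For $c$ even, $c\geq 2$, so $\Delta(c-1)=*$ and the value is $\{a*|a*\}$. Since $a\geq 1$ is an integer, translation gives $\{a*|a*\}=a+\{*|*\}=a$ ($\{*|*\}=0$, second player wins), which equals $a+\Delta(c)$. This is the same step used in Theorem \ref{thm-star4} and Lemma \ref{lem-cdifz}.

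The main obstacle is to justify the dominance of the infinitesimal options $\uparrow^{[s-2]}*$ and $\Gamma(c)$, and to make the translation step $\{x+*|x+*\}=x$ for integer $x$ rigorous. Both follow from the number-avoidance/translation principle, $\{x+G^L|x+G^R\}=x+\{G^L|G^R\}$ when $x$ is a number and $G$ is not. If that is unavailable in the paper, we would verify the equality by showing directly that the first player loses in $(s,a,0,c)-a-\Delta(c)$.
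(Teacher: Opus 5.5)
Your proof is correct and is essentially the paper's own argument. It uses the same option list $\{(s-1,a,0,c],[s,a-1,0,c),[s,a,0,c-1)\,|\,[s,a,0,c-1)\}$ and the same evaluations via Lemmas \ref{lem-esq}, \ref{lem-czero}, \ref{lem-azero} and \ref{lem-cdifz}. It then deletes the dominated Left options and ends with the same step $\{a+\Delta(c-1)\,|\,a+\Delta(c-1)\}=a+\Delta(c)$, which the paper states without comment.

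One small caveat concerns that last step. Number translation requires $G$ not equal to a number, and $\{*|*\}=0$ is a number, so for even $c$ the theorem does not apply. Justify $\{a*|a*\}=a$ instead by the simplicity theorem: $a$ is the only number $x$ with $a*\not\geq x$ and $x\not\geq a*$. Your direct check that $\{a*|a*\}-a$ is a second-player win also works.
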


\begin{proof}
Notice that $(s,a,0,c)=\{(s-1,a,0,c],[s,a-1,0,c),[s,a,0,c-1)\ |\ [s,a,0,c-1)\}$.
Recall that $(s-1,a,0,c]=\uparrow^{[s-2]}*$ from Lemma \ref{lem-esq}.
Then, by Lemmas \ref{lem-czero}, \ref{lem-czero2} and \ref{lem-cdifz}, $(s,a,0,c)=\{\ [s,a,0,c-1)\ |\ [s,a,0,c-1)\}=\{a+\Delta(c-1)\ |\ a+\Delta(c-1)\}=a+\Delta(c)$.
\end{proof}

\begin{theorem}\label{thm-caso4}
Let $s\geq 2$ and $b,c\geq 1$. Then $(s,0,b,c)$ is $0$, if $b=c=1$, and is $\Gamma(b+c)$, otherwise.
\end{theorem}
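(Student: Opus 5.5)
The plan is to imitate the proofs of Theorems \ref{thm-caso1} and \ref{thm-caso3}. We write down the options of $(s,0,b,c)$, identify each with a value computed in the earlier lemmas, and then simplify the resulting game form using Theorems \ref{thm-basic1} and \ref{thm-basic3}.

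\textbf{Step 1: the options.} Since $a=0$, Alice's moves are to play a vertex of $S$ or a $C$-vertex of $T$, and Bob's moves are to play a $B$- or $C$-vertex of $T$. Playing a vertex of $S$ dominates all of $T$, so only $S$ remains to be dominated, giving $(s-1,0,b,c]$. Playing a vertex of $T$ dominates all of $S$, giving a game of the form $[\,\cdot\,)$. Hence
\[
(s,0,b,c)=\left\{(s-1,0,b,c],\,[s,0,b,c-1)\ \big|\ [s,0,b-1,c),\,[s,0,b,c-1)\right\}.
\]
Let $n=b+c\geq 2$. Then:
\begin{itemize}
\item Lemma \ref{lem-esq} gives $(s-1,0,b,c]=\uparrow^{[s-2]}*$. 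This also covers $s=2$, where the value is $*=\uparrow^{[0]}*$.
\item Lemma \ref{lem-azero} gives $[s,0,b-1,c)=\Gamma(n-1)$.
\item $[s,0,b,c-1)=\Gamma(n-1)$, by Lemma \ref{lem-azero} if $c\geq 2$ and by Lemma \ref{lem-czero} if $c=1$ (then $b\geq1$).
\end{itemize}
So
\[
(s,0,b,c)=\left\{\uparrow^{[s-2]}*,\ \Gamma(n-1)\ \big|\ \Gamma(n-1)\right\}.
\]

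\textbf{Step 2: small cases.} If $b=c=1$, then $n=2$ and $\Gamma(1)=*$. We check directly that the first player loses in $\{\uparrow^{[s-2]}*,*\,|\,*\}$.
\begin{itemize}
\item If Bob moves to $*$, Alice replies to $0$.
\item If Alice moves to $*$, Bob replies to $0$.
\item If Alice moves to $\uparrow^{[s-2]}*$, Bob replies with the option $0$ of $\uparrow^{[s-2]}*$. This option exists by Theorem \ref{thm-basic2}, or trivially when $s=2$.
\end{itemize}
Hence the value is $0$. If $n=3$, then $\Gamma(2)=\uparrow$, and Theorem \ref{thm-basic3} gives $\{\uparrow^{[s-2]}*,\uparrow\,|\,\uparrow\}=2\uparrow*=\Gamma(3)$.

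\textbf{Step 3: $n\geq 4$, the main obstacle.} Here we want to discard Alice's option $\uparrow^{[s-2]}*$, so that $(s,0,b,c)=\{\Gamma(n-1)\,|\,\Gamma(n-1)\}=\Gamma(n)$ by Theorem \ref{thm-basic1}. It suffices to show $\uparrow^{[s-2]}*\leq\Gamma(n-1)$, which makes the option dominated.
\begin{itemize}
\item When $n-1$ is odd, the difference is $(n-2)\uparrow-\uparrow^{[s-2]}$ with $n-2\geq 2$. This is positive by Theorem \ref{thm-basic3}, since $\uparrow^{[s-2]}<2\uparrow\leq(n-2)\uparrow$.
\item When $n-1$ is even, we must compare $\uparrow^{[s-2]}*$ with $(n-2)\uparrow$ where $n-2\geq 3$. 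The difference is $(n-2)\uparrow+\downarrow_{[s-2]}*$.
\end{itemize}
In the second case, I would first try to show that Bob loses playing first in $(n-2)\uparrow+\downarrow_{[s-2]}*$, with $n-2\geq 3$, by a short case analysis in the style of the proof of Theorem \ref{thm-basic3}. The idea is that Alice answers a move in $\downarrow_{[s-2]}*$ by moving it to $0$ or $*$, and answers a move in an $\uparrow$ using the surplus copies of $\uparrow$. If this direct comparison resists, the fallback is to show that the option $\uparrow^{[s-2]}*$ is reversible through Bob's option $0$, replaced by nothing since $0$ has no Alice option. This uses $0\leq\{\Gamma(n-1)\,|\,\Gamma(n-1)\}$ in the form obtained in Theorem \ref{thm-caso1}, whose final step makes exactly this same simplification.
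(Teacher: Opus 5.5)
\textbf{There is a genuine gap, but it is easily closed.} Your Steps 1 and 2 and the even-$n$ half of Step 3 are correct, and they follow the paper's route. The paper writes down the same option list, then deletes $\uparrow^{[s-2]}*$ and applies Theorem~\ref{thm-basic1}. Its printed proof even misstates Bob's option $[s,0,b-1,1)$ as $\Gamma(b+1)$; your value $\Gamma(n-1)$ is the right one.

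The gap is the case $n=b+c$ odd, $n\geq 5$. There everything hinges on $\uparrow^{[s-2]}*\leq(n-2)\uparrow$, and you only say you would try to prove it. This does not follow by chaining the available facts. Theorem~\ref{thm-basic3} gives $\uparrow^{[s-2]}*<2\uparrow*$, but $3\uparrow-2\uparrow*=\uparrow*$ is fuzzy, so for $n=5$ you get nothing. Leaning on Theorem~\ref{thm-caso1} does not help either, because its proof makes the very same unexplained deletion.

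Both of your suggested routes do work.

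\emph{Direct comparison.} Let $D=(n-2)\uparrow+\downarrow_{[s-2]}*$ with $n-2\geq 3$, and let Bob move first.
If Bob turns an $\uparrow$ into $*$, he leaves $(n-3)\uparrow+\downarrow_{[s-2]}\geq 2\uparrow-\uparrow^{[s-2]}>0$ by Theorem~\ref{thm-basic3}.
If he turns $*$ into $0$, he leaves $(n-2)\uparrow+\downarrow_{[s-2]}>0$ by the same theorem.
If he moves $\downarrow_{[s-2]}$ to $\downarrow_{[s-3]}$ (so $s\geq 3$), then for $s\geq 4$ Alice answers $\downarrow_{[s-3]}\to *$, leaving $(n-2)\uparrow>0$. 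For $s=3$ the resulting position $(n-2)\uparrow*$ is already positive.
Hence $D\geq 0$.

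\emph{Reversibility.} This fallback is cleaner and handles every $n\geq 3$ at once. The condition to verify, however, is $0\leq G$ for the full game $G=(s,0,b,c)$, not for $\{\Gamma(n-1)|\Gamma(n-1)\}$. It holds because adding a Left option never decreases a game, so $G\geq\{\Gamma(n-1)|\Gamma(n-1)\}=\Gamma(n)=(n-1)\uparrow+\Delta(n)>0$. The option $(s-1,0,b,c]$ has the Right option $0$, since Bob can play any $B$- or $C$-vertex of $T$. So that Left option reverses into the empty set of Left options of $0$ and can be deleted. This gives $G=\Gamma(n)$ directly, with no need for Theorem~\ref{thm-basic3}, the separate $n=3$ step, or a parity split.
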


\begin{proof}
Clearly, $(s,0,1,1)=0$, since the first player always loses.
So assume that $t=b+c\geq 3$.
Notice that $(s,0,b,c)=\{(s-1,0,b,c],[s,0,b,c-1)\ |\ [s,0,b-1,c),[s,0,b,c-1)\}$.
Recall that $(s-1,0,b,c]=\uparrow^{[s-2]}*$ from Lemma \ref{lem-esq}.
Then $(s,0,b,1)=\{\uparrow^{[s-2]}*,\Gamma(b)\ |\ \Gamma(b+1),\Gamma(b)\}=\{\Gamma(b)|\Gamma(b)\}=\Gamma(b+1)$ by Lemmas \ref{thm-basic1}, \ref{lem-czero} and \ref{lem-cdifz}. 
Moreover, if $c\geq 2$, then $(s,0,b,c)=\{\uparrow^{[s-2]}*,\Gamma(b+c-1)\ |\ \Gamma(b+c-1),\Gamma(b+c-1)\}=\{\Gamma(b+c-1)|\Gamma(b+c-1)\}=\Gamma(b+c)$ by Lemmas \ref{thm-basic1}, \ref{lem-azero} and \ref{lem-cdifz}.
\end{proof}

\begin{theorem}\label{thm-caso5}
Let $s\geq 2$ and $a,b,c\geq 1$. Then $(s,a,b,c)=[s,a,b,0)+\Delta(c)$.
\end{theorem}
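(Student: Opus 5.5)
We follow the pattern of Theorems \ref{thm-caso3} and \ref{thm-caso4}: write out the options of $(s,a,b,c)$ after one move and simplify using the lemmas on the half-games.

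\textbf{The options.} A first move in $S$ must be Alice's, since $S$ is all colored $A$. It dominates all of $T$ and leaves $(s-1,a,b,c]$. A first move in $T$ by either player dominates all of $S$ and leaves a game of the form $[s,\cdot,\cdot,\cdot)$. Hence
\[
(s,a,b,c)=\left\{(s-1,a,b,c],\,[s,a-1,b,c),\,[s,a,b,c-1)\ \big|\ [s,a,b-1,c),\,[s,a,b,c-1)\right\}.
\]

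\textbf{Evaluating each option.}
\begin{itemize}
\item By Lemma \ref{lem-esq}, $(s-1,a,b,c]=\uparrow^{[s-2]}*$ (or $*$ when $s=2$), which is infinitesimal.
\item By Lemma \ref{lem-cdifz}, $[s,a,b,c-1)=[s,a,b,0)+\Delta(c-1)$ and $[s,a,b-1,c)=[s,a,b-1,0)+\Delta(c)$.
\item For $[s,a-1,b,c)$: if $a\geq 2$, Lemma \ref{lem-cdifz} gives $[s,a-1,b,0)+\Delta(c)$; if $a=1$, Lemma \ref{lem-azero} gives $\Gamma(b+c)$.
\end{itemize}

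\textbf{Removing dominated options.} Since $a\geq 1$, Lemma \ref{lem-czero} shows $[s,a,b,0)$ is a positive number, at least $1/2^{b-a+1}$. So $[s,a,b,0)+\Delta(c-1)$ exceeds every infinitesimal option of Alice, namely $\uparrow^{[s-2]}*$ and $\Gamma(b+c)$.

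By Lemma \ref{lem-czero2}, applied with $(x,y)=(a-1,b)$, we get $[s,a,b,0)\geq [s,a-1,b,0)+1/2^{a-b+2}$. A non-infinitesimal positive gap survives adding or subtracting $*$. So $[s,a-1,b,0)+\Delta(c)<[s,a,b,0)+\Delta(c-1)$, and Alice's options other than $[s,a,b,c-1)$ are all dominated.

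For Bob, we apply Lemma \ref{lem-czero2} to $a-b+1>a-b$. The intended reading is that $[s,a,b-1,0)\geq [s,a,b,0)+1/2^{a-b+3}$. One must check that the lemma's hypothesis $a\geq 1$ still holds here, which it does. Hence Bob's option $[s,a,b-1,c)$ is dominated by $[s,a,b,c-1)$.

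\textbf{Conclusion.} After removing dominated options,
\[
(s,a,b,c)=\{[s,a,b,0)+\Delta(c-1)\ |\ [s,a,b,0)+\Delta(c-1)\}.
\]
Write $x=[s,a,b,0)$, a number.
\begin{itemize}
\item If $c$ is even, this is $\{x*|x*\}$. Since $x$ is a number, $\{x*|x*\}=x+\{*|*\}=x$ by the number translation principle.
\item If $c$ is odd, this is $\{x|x\}=x*$.
\end{itemize}
This mirrors the final step of Lemma \ref{lem-cdifz}.

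\textbf{Main obstacle.} The delicate step is the domination comparisons. They mix a number plus $*$ against infinitesimals and against other numbers plus $*$. Lemma \ref{lem-czero2} is stated only for the unstarred half-games, so we must argue that a gap of at least $1/2^{k}$ between numbers is preserved under adding $*$, using that $*$ is infinitesimal. We must also handle the edge case $a=1$ separately via Lemma \ref{lem-azero}.
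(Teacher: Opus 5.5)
Your proposal is correct and is essentially the paper's proof: the same option list, Lemmas \ref{lem-esq}, \ref{lem-cdifz} and \ref{lem-czero2} used to evaluate and prune options, and the collapse of $\{x+\Delta(c-1)\,|\,x+\Delta(c-1)\}$ to $x+\Delta(c)$. Your separate treatment of $a=1$ via Lemma \ref{lem-azero} is actually more careful than the paper, since Lemma \ref{lem-cdifz} does not cover $[s,0,b,c)$.

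Two small fixes are needed. First, $[s,a,b,0)\geq 1/2^{b-a+1}$ fails when $a-b\geq 3$, but only positivity is used, so simply drop that bound. Second, because $\{*|*\}=0$ is itself a number, the number translation theorem does not literally apply in the even case. Justify $\{x*|x*\}=x$ by the simplicity theorem instead: $x$ is the only number $y$ with $x*\not\geq y$ and $y\not\geq x*$.
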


\begin{proof}
Notice that $(s,a,b,c)=\{(s-1,a,b,c],[s,a-1,b,c),[s,a,b,c-1)|[s,a,b-1,c),[s,a,b,c-1)\}$.
Then, from Lemma \ref{lem-cdifz}, 
$(s,a,b,c)=\{\uparrow{[s-2]}*,[s,a-1,b,0)+\Delta(c),[s,a,b,0)+\Delta(c-1)]\ |\ [s,a,b-1,0)+\Delta(c),[s,a,b,0)+\Delta(c-1)\}$.
Therefore, from Lemma \ref{lem-czero2}, $(s,a,b,c)=\{[s,a,b,0)+\Delta(c-1)\ |\ [s,a,b,0)+\Delta(c-1)\}\ =\ [s,a,b,0)+\Delta(c)$.
\end{proof}

\section{Conclusions}

In this paper, we obtained the value of the Normal Domination Partizan game for any complete bipartite graph and any complete split graph for any possible initial coloring of its vertices. This implies from the Combinatorial Game Theory that determining the winner of this game is polynomial time solvable in graphs whose connected components are complete bipartite graphs or complete split graphs (for any possible initial coloring of its vertices).

In the following, we present the algorithm that returns the value of the game in stars, summarizing the results of Theorems \ref{thm-star1}, \ref{thm-star2}, \ref{thm-star3}, \ref{thm-star4} and \ref{thm-star5}.
Let $-G$ be the instance of the game obtained from the colored graph $G$ by swapping the colors $A$ and $B$ in the vertices.
Recall that $\Delta(k)=*$, if $k$ is odd, and $\Delta(k)=0$, if $k$ is even, and that $\Gamma(k)=(k-1)\uparrow+\Delta(k)$.

\begin{algorithm}
\caption{Value of the game in Stars}\label{alg-star}
\begin{algorithmic}[1]
\Procedure{Value-Star}{star graph $G$}    \Comment{\emph{The vertices of $G$ are colored $A$, $B$ or $C$}}
\If{$G$ has only one vertex} \Return $1$, $-1$, $*$ if the color is $A$, $B$ or $C$, resp.\EndIf
\If{$G$ has only two vertices}
    \If{color $C$ appears or there are two colors} \Return $*$\EndIf
    \State\Return $1$ or $-1$ depending on if the only color is $A$ or $B$, resp.
\EndIf
\State $a,b,c\gets$ number of leaves with color $A$, $B$ and $C$, resp.
\State Let $G'$ be obtained from $G$ by removing the leaves of color $C$
\If{the center of the star has color $A$}
    \If{$a=0$} \Return $\Gamma(b+c)$ \EndIf
    \If{$a\leq b$ and $c=0$} \Return $1/2^{b-a+1}$ \EndIf
    \If{$a>b$ and $c=0$} \Return $a-b$ \EndIf
    \State\Return $\textsc{Value-Star}(G')+\Delta(c)$
\EndIf
\If{$a<b$ or the center of the star has color $B$}
    \Return $-\textsc{Value-Star}(-G)$
\EndIf
\If{the center of the star has color $C$}
    \If{$a=b=0$} \Return $*$ or $*2$ depending on if $c$ is odd or even, resp.\EndIf
    \If{$a\geq b+2$ and $c=0$} \Return $\uparrow^{[a-b-1]}*$\EndIf
    \If{$c=0$} \Return $*2$ or $*$ depending on if $a=b$ or $a=b+1$, resp.\EndIf
    \If{$c\geq 2$ is even} \Return $\textsc{Value-Star}(G')$\Comment{\emph{from now on, $c\geq 1$ is odd}}\EndIf
    \If{$a\geq b+2$} \Return $\{0,\uparrow^{[a-b-1]}*\ |\ 0, \uparrow^{[a-b-1]}*\}$\EndIf
    \State\Return $*$ or $*2$ depending on if $a=b$ or $a=b+1$, resp.\Comment{\emph{last case: $a\in\{b,b+1\}$}}
\EndIf
\EndProcedure
\end{algorithmic}
\end{algorithm}

In the following, we present the algorithm that returns the value of the game in complete split graphs, summarizing the results of Theorems \ref{thm-clique} and \ref{thm-split}.

\begin{algorithm}
\caption{Value of the game in Complete Split Graphs}\label{alg-split}
\begin{algorithmic}[1]
\Procedure{Value-complete-Split}{complete split graph $G$}
\If{$G$ is a star} \Return $\textsc{Value-Star}(G)$\EndIf
\State Let $K$ and $S$ be the clique and the stable set of $G$ with $|K|\geq 2$
\If{$S=\emptyset$}\Comment{\emph{case of complete graphs}}
    \If{color $C$ appears or there are two colors} \Return $*$\EndIf
    \State\Return $1$ or $-1$ depending on if the only color is $A$ or $B$, resp.
\EndIf
\State Let $G'$ be obtained from $G$ by identifying all vertices of $K$ in a single vertex $u$ colored $C$
\If{all vertices of $K$ have color $A$} Recolor $u$ with color $A$\EndIf
\If{all vertices of $K$ have color $B$} Recolor $u$ with color $B$\EndIf
\State \Return $\textsc{Value-Star($G'$)}$
\EndProcedure
\end{algorithmic}
\end{algorithm}

In the following, we present the algorithm that returns the value of the game in complete bipartite graphs, summarizing the results of Theorems \ref{thm-bip1}, \ref{thm-caso1}, \ref{thm-caso2}, \ref{thm-caso3}, \ref{thm-caso4} and \ref{thm-caso5}.

\begin{algorithm}
\caption{Value of the game in Complete Bipartite Graphs}\label{alg-bip}
\begin{algorithmic}[1]
\Procedure{Value-complete-Bip}{complete bipartite graph $G$}    
\If{$G$ is a star} \Return $\textsc{Value-Star}(G)$\EndIf
\State Let $S$ and $T$ be the parts of $G$ with sizes $s=|S|\geq 2$ and $t=|T|\geq 2$
\If{all vertices have color $A$} \Return $\max\{s,t\}$\EndIf
\If{all vertices have color $B$} \Return $-\max\{s,t\}$\EndIf
\If{color $C$ or at least two colors appear in both $S$ and $T$} \Return $0$\EndIf
\If{all vertices of $S$ are colored $A$ and all of $T$ are colored $B$} \Return $0$\EndIf
\If{all vertices of $S$ are colored $B$ and all of $T$ are colored $A$} \Return $0$\EndIf
\State Swap $S$ and $T$ if $S$ has color $C$ or at least two colors.\Comment{\emph{That is, from now on, all vertices of $S$ have the same color $A$ or $B$ and $T$ contains the color $C$ or at least two colors}}
\If{all vertices of $S$ are colored $B$} \Return $-\textsc{Value-complete-Bip}(-G)$\EndIf
\State $a,b,c\gets$ number of vertices of $T$ colored $A$, $B$ and $C$, resp.\Comment{\emph{from now on, all vertices of $S$ have color $A$}}
\If{$a>b$} \Return $a-b+\Delta(c)$ \EndIf
\If{$b\geq a\geq 1$} \Return $1/2^{b-a+1}+\Delta(c)$ \EndIf
\If{$a=0$} \Return $0$ or $\Gamma(b+c)$ depending on if $b+c=2$ or $b+c>2$\EndIf
\EndProcedure
\end{algorithmic}
\end{algorithm}

\bibliographystyle{plain}
\bibliography{refs}

\end{document}